 \numberwithin{equation}{section}
\newtheorem{lemma}{Lemma}[section]
\newtheorem{theorem}[lemma]{Theorem}
\newtheorem{remark}[lemma]{Remark}
\newtheorem{coro}[lemma]{Corollary}
\newtheorem{definition}[lemma]{Definition}
\newtheorem{example}[lemma]{Example}
\title{State space decomposition\\ for nonautonomous dynamical systems}
\author{Xiaopeng ~Chen}
\address[X. Chen]{%
 School of Mathematics and Statistics\\ Huazhong University of Science and
Technology\\ Wuhan 430074, China} \email[X.
Chen]{chenxiao002214336@yahoo.cn}
\author{Jinqiao ~Duan}
\address[J. Duan]{%
Department of Applied Mathematics
\\ Illinois Institute of Technology\\ Chicago, IL 60616, USA
%  and  School of Mathematics and Statistics\\ Huazhong University of
%Science and Technology\\ Wuhan 430074, China
}
\email[J.~Duan]{duan@iit.edu}
\date{\today }
\subjclass[2000]{37B55, 11B37, 34D45, 37B20, 37B25, 37B35.}
\keywords{Chain recurrent set, Nonautonomous dynamical system,
skew--product flow, pullback attractor, Lyapunov function.}
\begin{document}
%%%%%%%%%%%%%%%%%%%%%%%%%%%%%%
%%%%%%%%%%%%%%%%%%%%%%%%%%%%%%
%%%%%%%%%%
\begin{abstract}
Decomposition of   state spaces into dynamically different
components  is helpful for the understanding of dynamical
behaviors of complex systems.
 A Conley type decomposition theorem is proved for
nonautonomous dynamical systems defined on a non-compact but
separable state space. Namely, the state space can be decomposed
into   a chain recurrent part and a gradient-like part.

This  result applies  to both nonautonomous ordinary differential
equations on Euclidean space (which is only locally compact),  and
  nonautonomous partial differential equations on infinite
dimensional function space (which is not even locally compact).
This decomposition result is demonstrated by discussing a few
concrete examples, such as the Lorenz system and the Navier-Stokes
system, under time-dependent forcing.
\end{abstract}

\maketitle

%%%%%%%%%%%%%%%%%%%%%%%%%%%%%%%%%%%%%%%%%%%%%%%%%%%
\section{Introduction} \label{intro}

The decomposition of state spaces for  dynamical systems or flows
is desirable for better understanding of dynamical behaviors. The
Conley decomposition theorem \cite{Con} says that any flow on a
\emph{compact} state space decomposes the space into a chain
recurrent part and a gradient-like part. The theorem describes the
dynamical behavior of each point in the systems. It is considered
as a fundamental theorem of dynamical systems \cite{Nor}.

There are two essential concepts in the consideration of Conley
decomposition of state spaces. One is the chain recurrence  set.
Conley  \cite{Con} showed that the  chain recurrent set
$CR(\varphi)$ for a dynamical system $\varphi$ on a compact state
space can be represented in terms of complement sets of (local)
attractors. This result is widely studied and further extended by
others in different contexts \cite{Cho, Chu, ColoniusJohnson, Hur,
Hur1, Hur2, Pat} or for random dynamical systems \cite{Liu1, Liu2,
Liu3}.

The other essential concept is the so-called complete Lyapunov
function, which quantifies gradient-like behavior. A complete
Lyapunov function for a dynamical system $\varphi$ is a continuous,
real-valued function $L$ defined on the state space which is
strictly decreasing on orbits outside the chain redurrent set and
such that: (a) The range $L(CR(\varphi))$ is nowhere dense; (b) If
$c$  belongs to the range  $L(CR(\varphi))$, then $L^{-1}(c)$ is a
component of the chain recurrent set. We call the complement of the
chain recurrent set the gradient-like part of the flow. For more
details see \cite{Con}. Furthermore, the complete Lyapunov function
can be extended to non-compact state spaces for deterministic
dynamical systems \cite{Hur1, Hur3, Pat, Pat1}  or random dynamical
systems \cite{Liu2, Liu3}. Especially, in
  \cite{Liu2}, the base space needs to be separable to
construct  the countable local attractors. While in \cite{Liu3},
it is the weak complete Lyapunov function for the random
semiflows.

In the present paper, we consider Conley type decomposition for
nonautonomous dynamical systems (NDS), defined on  not necessarily
compact  state spaces. Recall that a nonautonomous dynamical
system is defined in terms of a cocycle mapping on a state space
that is driven by an \emph{autonomous} dynamical system acting on
a base space. More details about nonautonomous dynamical systems
are reviewed in the next section. The standard examples of
nonautonomous dynamical systems are those generated by
nonautonomous ordinary or partial differential equations, which
  arising from modeling in biological, physical and environmental systems.
We will prove the following main result.

\begin{theorem}\label{1.2} (Conley decomposition for NDS). \\
A nonautonomous dynamical system with a separable (but not
necessarily compact) state space  decomposes the space into a
chain recurrent part and a gradient-like part.
\end{theorem}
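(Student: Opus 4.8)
The plan is to follow the classical Conley program --- characterize the chain recurrent set through attractor--repeller pairs, then build a complete Lyapunov function by summing Lyapunov functions attached to each pair --- but carried out in the skew-product formulation and with \emph{pullback} attractors replacing the attractors that are automatically available in the compact case. I would work on the extended phase space $X\times P$, where $P$ is the base carrying the autonomous driving flow $\theta_t$, and describe the chain recurrent set $CR$ of the nonautonomous system via $(\eps,T)$-chains for the skew-product flow $\pi_t(x,p)=(\varphi(t,p,x),\theta_t p)$.

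First I would set up the attractor--repeller dictionary. For each pullback attractor $A$ of the cocycle I would define the complementary repeller $A^{*}$ and prove the nonautonomous analogue of Conley's identity
\begin{equation*}
CR=\bigcap_{A}\bigl(A\cup A^{*}\bigr),
\end{equation*}
the intersection running over all pullback attractors $A$. The inclusion $CR\subseteq A\cup A^{*}$ follows because a point lying strictly between a repeller and an attractor is driven off it by the flow and so cannot close up into an $(\eps,T)$-chain; the reverse inclusion is the substantive direction, requiring that for a point outside $CR$ one produces an attractor separating it from its would-be chain, exactly as in the compact theory but with attraction understood in the pullback sense.

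The decisive use of separability comes next. Since $X$ is separable it admits a countable base, and I would exploit this to extract a countable subfamily of pullback attractors $\{A_{n}\}_{n\ge 1}$ that already realizes the intersection, so that
\begin{equation*}
CR=\bigcap_{n\ge 1}\bigl(A_{n}\cup A_{n}^{*}\bigr).
\end{equation*}
This countable reduction is precisely what replaces compactness: without it the intersection above ranges over an unmanageable family and no single continuous function can encode it. For each pair $(A_{n},A_{n}^{*})$ I would then construct a continuous $\ell_{n}\colon X\times P\to[0,1]$ that vanishes on $A_{n}$, equals $1$ on $A_{n}^{*}$, and is strictly decreasing along every orbit meeting the complement of $A_{n}\cup A_{n}^{*}$ --- the usual ratio of pullback-attraction quantities, averaged along the flow to force strict monotonicity.

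Finally I would assemble the complete Lyapunov function as the weighted sum
\begin{equation*}
L=\sum_{n\ge 1}3^{-n}\,\ell_{n}.
\end{equation*}
Uniform convergence of the series yields continuity of $L$; strict decrease off $CR$ holds because any point outside $CR$ escapes some $A_{n}\cup A_{n}^{*}$, making the corresponding $\ell_{n}$, and hence $L$, strictly decrease along its orbit; and the ternary weights guarantee that $L(CR)$ is nowhere dense while each level set $L^{-1}(c)$ with $c\in L(CR)$ is a single component of $CR$, giving properties (a) and (b) of the Introduction. I expect the main obstacle to be the countable selection of pullback attractors: in a merely separable, possibly non-locally-compact state space one must argue carefully that the countable topological base genuinely detects every pullback attractor, and that the pullback (rather than forward) notion of attraction is strong enough to sustain the separating argument for points outside $CR$.
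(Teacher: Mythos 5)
Your overall architecture matches the paper's: characterize $CR$ via attractor--repeller pairs, use separability to get a countable family, and sum ratio-type Lyapunov functions with ternary weights. However, there is a genuine gap at the heart of your plan: you insist that the attractors in the identity $CR=\bigcap_{A}(A\cup A^{*})$ be \emph{pullback} attractors, which by definition are compact and pullback-attracting. In the substantive direction of Conley's argument, the separating object for a point $x\notin CR$ is built from the set $U_{x}$ of endpoints of $(\eps,T)$-chains starting at $x$, and the resulting invariant set $A_{x}=\bigcap_{n}\overline{\bigcup_{s\geq n\tau}\varphi(s,\theta_{-s}p)U_{x}(\theta_{-s}p)}$ need not be compact, can be empty, and need not attract anything in the pullback sense when $X$ is not compact (or not even locally compact, as for PDE phase spaces). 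The paper resolves exactly this point by working with weak ``local attractors'' determined by pre-attractors --- explicitly allowing noncompact or empty ones --- and only identifies them with pullback attractors under an additional pre-compactness hypothesis (its Lemma 3.1 and the remark following it). With your restriction to pullback attractors, the reverse inclusion $\bigcap_{A}(A\cup A^{*})\subseteq CR$ is unsubstantiated: there may simply be too few pullback attractors to separate a non-chain-recurrent point from its would-be chains. You flag this as ``the main obstacle,'' but flagging it does not close it; the known fix is to change the class of attractors, not to argue harder about pullback attraction.

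Two further points. First, you never specify whether the $\eps$ in your chains is a constant or a function of the point; in non-compact spaces this is decisive. The paper (following Hurley) takes $\eps\in F(P\times X)$, i.e.\ $\eps=\eps(p,x)$ varies along the chain, and the attractor characterization of $CR$ is false in general for constant-$\eps$ chains on non-compact spaces. Second, your countable reduction --- selecting, via a countable base, countably many attractors that ``realize the intersection'' --- is itself nontrivial and is not how the paper proceeds: the paper directly \emph{constructs} the countable family $\{A_{n}\}$ by running the chain construction $U_{x_{j}}$ over a countable dense set $\{x_{j}\}\subset X$ with rational parameters $\eps_{j},\tau_{j}$, so that countability is built in from the start rather than extracted afterwards. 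If you rework your proof with weak local attractors, function-valued $\eps$, and this explicit countable family, it becomes the paper's argument.
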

Here the ``gradient-like part" for the NDS is indicated by the
complete Lyapunov function. Such function is constructed using
  special attractor-repeller pairs. It is known that the
attractor-repeller pairs are  basic notions for the definition of
Morse decompositions \cite{Ochs,Ras,Ras1}. The Conley type state
space decomposition for NDS can be applied to both nonautonomous
ordinary differential equations on Euclidean space (which is only
locally compact),  and
  nonautonomous partial differential equations on infinite
dimensional function space (which is not even locally compact). In
Section \ref{appl}, we illustrate this result by discussing a few
concrete examples, such as the Lorenz system and the Navier-Stokes
system, under time-dependent forcing.

To prove the above decomposition theorem, we first  define and
investigate the chain recurrent set for a nonautonomous dynamical
system. In this context, a local attractor   is a pullback
attractor when it is a nonempty compact subset in the state space.
The relationship of different attractors for nonautonomous
dynamical systems is considered in \cite{Che1, Joh}. In the case
of nonautonomous ordinary or partial differential equations, we
can consider chain recurrent set for the corresponding
skew-product dynamical system \cite{Che, Pea}. It is known that
the global attractor for a skew-product dynamical system
corresponds to the pullback attractor on the state space
\cite{Che1, Che3, Yej}. We prove a similar relation for   local
attractors (Lemma \ref{3.10}) and apply to the chain recurrent
set. Then, we consider the complete Lyapunov function for NDS.
This concept of Lyapunov functions   is weaker than that for
autonomous dynamical systems. Note that the base space here does
not need to be separable.

This paper is organized as follows. After reviewing basic facts
for nonautonomous dynamical systems (NDS) in Section \ref{nds}, we
investigate
 chain recurrent sets and complete Lyapunov functions for NDS
 in Section \ref{chain} and Section \ref{Lya}, respectively.
The nonautonomous decomposition Theorem \ref{1.2} is thus proved.
In Section 5 we present a few examples, both ordinary and partial
differential equations, to demonstrate the decomposition result.

%%%%%%%%%%%%%%%%%%%%%%%%%%%%%%%%%%%%
\section{Prelimineries} \label{nds}

We will use the symbol $\mathbb{T}$ for either $\mathbb{R}$ or
$\mathbb{Z}$, and  denote by $\mathbb{T}^+$ all non-negative
elements of $\mathbb{T}$.  Let $dist_X$ denote the Hausdorff
semi-metric between two nonempty sets of a metric space $(X,d_X)$,
that is
\begin{equation}   dist_X(A,B)=\sup\limits_{a\in A}\inf\limits_{b\in
B}d_X(a,b),
\end{equation}
for $A\subset X, B\subset X$. In addition, if  A or B are empty,
we set $dist_X(A, B)= 0$. We recall some basic definitions for
 nonautonomous dynamical systems \cite{Che1,Ras,Sch} on state space   $X$
 with base space (also a metric space) $P$.

\begin{definition}(Nonautonomous Dynamical System (NDS)). An
autonomous dynamical system $(P,\mathbb{T},\theta)$ on $P$ consists
of a continuous mapping $\theta_t:\mathbb{T}\times P\rightarrow  P$
for which the $\theta_t=\theta(t,\cdot):P\rightarrow P$, $t\in
\mathbb{T}$, form a group of homeomorhpisms on $P$ under composition
over $\mathbb{T}$, that is, satisfy
\begin{equation*}
\theta_0=id_P, \quad \quad \theta_{t+s}=\theta_t\cdot\theta_s
\end{equation*}
for all $t,s\in \mathbb{T}$. In addition, a continuous mapping
$\varphi:\mathbb{T}^+\times P\times X\rightarrow X$ is called a
cocycle with respect to an autonomous dyanmical system
$(P,\mathbb{T},\theta)$ if it satisfies
\begin{equation*}
\varphi(0,p,x)=x,\quad
\varphi(t+s,p,x)=\varphi(t,\theta_sp,\varphi(t,p,x))
\end{equation*} for all $t,s\in \mathbb{T}^+$ and $(p,x)\in P \times
X$.
\end{definition}
The triple $<X,\varphi,(P,\mathbb{T},\theta)>$ is called a
nonautonomous dynamical system \cite{Che1,Sch}. Let
$(\mathbb{U},d_\mathbb{U})$ be the cartesian product of $(P,d_P)$
and $(X,d_X)$. Then the mapping $\pi: \mathbb{T}^+\times
\mathbb{U}\rightarrow \mathbb{U}$ defined by
\begin{equation*}
  \pi(t,(p,x)):=(\theta_tp,\varphi(t,p,x))
\end{equation*}
forms a semi-group on $\mathbb{U}$ over $\mathbb{T}^+$; see
\cite{Sel}.

 A subset $M$ of  $\mathbb{U}$ is called a nonautonomous set.
 Let $M(p):=\{x\in X: (p,x)\in M\} \mbox{ for } \; p\in P$.
 A nonautonomous set  $M$ is called closed, compact or open if
  $M(p), p\in P$, are closed, compact or open, respectively.
A nonautonomous set $M$ is called forward invariant if $\varphi(t,p,
M(p))\subset M(\theta_tp)$ for all $t\in \mathbb{T}^+$, $p\in P$ and
backward invariant if $\varphi(t,p, M(p))\supset M(\theta_tp)$ for
all $t\in \mathbb{T}^+$, $p\in P$. A nonautonomous set $M$ is called
invariant if $\varphi(t,p, M(p))=M(\theta_tp)$ for all $t\in
\mathbb{T}^+$ and $p\in P$.

Let $\mathscr{D}$ be a family of sets $(D(p))_{p\in P}$ in $X$ such
that if $(D(p))_{p\in P}\in \mathcal {D}\subset \mathscr{D} $,
$(D'(p))_{p\in P}\in  \mathscr{D} $ and $D'(p)\subset D(p)$, then
$(D'(p))_{p\in P}\in \mathcal {D} $.

\begin{definition}(Pullback attractor for NDS).\\
Let $<X, \varphi, (P, T, \theta)>$ be a nonautonomous dynamical
system. An element $A(p)\in \mathcal {D}$ such that $A(p)$ is
compact is called pullback attractor (with respect to $\mathcal
{D}$) if the invariance property
\begin{equation*}   \varphi(t,
p, A(p))=A(\theta_tp), \quad \quad t\in \mathbb{T}^+, p\in P
\end{equation*}
and the pullback convergence property
\begin{equation*}
\lim \limits_{t\rightarrow \infty}dist_X(\varphi(t,\theta_{-t}p,
D(\theta_{-t}p )), A(p))=0
\end{equation*}
for all $(D(p))_{p\in P}\in \mathcal {D}$ are fulfilled.
\end{definition}
%For this concept of pullback attractor referent to  \cite{Sch}.
In the following we recall the definitions of local attractor and
chain recurrent set, which are important to  the state space
decomposition.
\begin{definition}(Local attractor for NDS).\\
  An open set $U(p)$ is called a  pre-attractor if it satisfies
\begin{equation}\label{2.1}
\overline{\bigcup \limits_{t\geq
\tau(p)}\varphi(t,\theta_{-t}p)U(\theta_{-t}p)}\subset U(p)
\quad\quad \mbox{for some } \tau(p)>0,
\end{equation}where $\tau$ is a function.   We define the
local attractor $A(p)$ inside $U(p)$ as follows:
\begin{equation}\label{2.2}
  A(p)=\bigcap\limits_{n\in \mathbb{N}}\overline{\bigcup\limits_{s\geq
  n\tau(p)}\varphi(s,\theta_{-s}p)U(\theta_{-s}p)}.
\end{equation}
The basin of attraction $B(A,U)(p)$, determined by $A(p)$ and
$U(p)$, is defined as follows:
\begin{equation}
  B(A,U)(p)=\{x:\varphi(t,p)x\in U(\theta_{t}p) \mbox{ for some } t\geq
  0\}.
\end{equation}
\end{definition}
It can be proved that if the time $\mathbb{T}$ is two-sided and
the state space $X$ is compact, then for any $D(p)\subset
B(A,U)(p)$, we have
\begin{equation*}
\lim \limits_{t\rightarrow \infty}dist_X(\varphi(t,\theta_{-t}p,
D(\theta_{-t}p )), A(p))=0.
\end{equation*}
For a given local attractor $A(p)$, we define the repeller,
corresponding to $A(p)$, as $R(p):=X-B(A,U)(p)$. We call the pair
$(A, R)$ an attractor-repeller pair. Observe that
attractor-repeller pair depends on the pre-attractor $U(p)$. We
allow $A(p)=\emptyset$ or $R(p)=\emptyset$.  We use $F(P\times X)$
to denote the set of all maps from $P\times X$ to $\mathbb{R}^+$
and is continuous at fixed $p\in P$.

\begin{definition}(Chain recurrent set for NDS). \newline
(\em a) For a given  $ \varepsilon\in F(P\times X), T(p)>0$, the
sequence $\{x_1(p), \cdots,x_n(p), x_{n+1}(p);t_1,$ $t_2,$ $\cdots,
t_n\}$ is called an $(\varepsilon, T)(p)$-chain for $\varphi$ from
$x(p)$ to $y(p)$ if for $1\leq i\leq n$
\begin{equation*}
x_1(p)=x(p), \quad x_{n+1}(p)=y(p), \quad t_i\geq T(p).
\end{equation*}
and
\begin{equation*}
  d_X(\varphi(t_i,\theta_{-t_i}p)x_i(\theta_{-t_i}p),x_{i+1}(p))<\varepsilon(p,\varphi(t_i,\theta_{-t_i}p)x_i(\theta_{-t_i}p)),
\end{equation*}
where $x_i(p)$ is the map  from $P$ to $X$. \\
(\em b) A  map $x$ from $P$ to $X$ is called chain recurrent if
there exists an $(\varepsilon, T)(p)$-chain beginning and ending at
$x(p)$ for any $ \varepsilon\in F(P\times X),
 T(p)>0$.

(\em c) We denote $CR_{\varphi}(p)$ the chain recurrent set for
$\varphi$, i.e.,
\begin{equation*}
 CR_{\varphi}(p)=\{x(p)\mid x(p) \mbox{ is  chain recurrent variable  } \}.
\end{equation*}
\end{definition}
\begin{example}
Recall that a mapping $\gamma^*:P\rightarrow \mathcal {G}$ is called
a generalized fixed point of the cocycle $\Phi$
 if
\begin{equation*}
 \Phi(t,p,\gamma^*(p))=\gamma^*(\theta_tp) \mbox{ for
 }  t\in \mathbb{R}^+.
\end{equation*}
 Such a generalized fixed point for the NDS $\Phi$ is  chain recurrent with respect to $P$(for more details see \cite{Dua}).
\end{example}
\begin{definition}\label{2.3}(Stationary solution for NDS).\\
A solution x(p) is called stationary for NDS $\varphi$ if
$\varphi(t,p,x(p))=x(\theta_tp)$.
\end{definition}
From the Definition \ref{2.3} we know that the stationary solution
for NDS is in the chain recurrent set. In fact, we know from the
definition that  a generalized fixed point for NDS is also a
stationary solution for NDS. This is the same as  the random case
\cite{Sch1}. If $x(p)$ and $y(p)$ are chain recurrent with respect
to $P$. We say $x(p)\sim y(p)$ if and only if for each
$\varepsilon\in F(P\times X)$, $T(p)>0$ there is an
$(\varepsilon,T)$(p)-chain from $x(p)$ and $y(p)$ and one from
$y(p)$ to $x(p)$ for $p\in P$. The equivalence classes are called
the chain transitive components of $\varphi$.

\begin{definition} \label{complete}
  (Complete Lyapunov function for  NDS)\\
   A complete Lyapunov
  function for a NDS  $\varphi$  is a  function $L : P\times X\mapsto
  \mathbb{R}^+$,  with $L(p,\cdot)$ being  continuous for  $p\in P$, that
  satisfies the following conditions:\\
({\em a}) If $x\in CR_\varphi(p)$, then
\begin{equation*}
 L(\theta_tp,\varphi(t,p)x)=L(p,x), \quad \quad \forall t>0;
\end{equation*}
({\em b}) If $x\in X-CR_\varphi(p)$, then
\begin{equation*}
 L(\theta_tp,\varphi(t,p)x)\leq L(p,x), \quad \quad \forall t>0;
\end{equation*}
({\em c}) The range of $L(p,\cdot)$ on $CR_\varphi(p)$ is a compact
nowhere dense
subset of $[0,1]$; \\
({\em d}) If $x(p)$ and $y(p)$ belong to the same chain transitive
component of $\varphi$, then
$$L(p,x(p))=L(p,y(p)).$$ And if $x(p)$ and $y(p)$ belong to the different chain transitive
components, then
$$L(p,x(p))\neq L(p,y(p)).$$
\end{definition}

%%%%%%%%%%%
%%%%%%%%%%%
\section{Chain recurrent sets for nonautonomous dynamical
systems} \label{chain}

First we present some Lemmas related to the NDS.
\begin{lemma}
Assume that $U(p)$ is a given pre-attractor and $\bigcup
\limits_{t\geq \tau(p)}\varphi(t,\theta_{-t}p)U(\theta_{-t}p)$ is
pre-compact. Then $A(p)$ is a pullback attractor with respect to
$\mathcal {D}=\{D(p)\mid  {D(p)} $  is a closed nonempty subset of
$ U(p)$ $\mbox{ for every }  $ $p\in P\}$.
\end{lemma}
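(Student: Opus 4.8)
The plan is to reduce everything to a single tractable description of $A(p)$ as a pullback $\omega$-limit set and then verify in turn the three defining properties of a pullback attractor: membership in $\mathcal{D}$ (i.e. compactness), invariance, and pullback convergence. Writing $\Lambda_n(p)=\overline{\bigcup_{s\geq n\tau(p)}\varphi(s,\theta_{-s}p)U(\theta_{-s}p)}$ so that $A(p)=\bigcap_{n\in\mathbb{N}}\Lambda_n(p)$, I would first establish the characterization that $y\in A(p)$ if and only if there exist $t_k\to\infty$ in $\mathbb{T}^+$ and points $x_k\in U(\theta_{-t_k}p)$ with $\varphi(t_k,\theta_{-t_k}p)x_k\to y$. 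The inclusion $\supseteq$ is immediate because any such tail eventually enters every $\Lambda_n(p)$, while $\subseteq$ follows by a diagonal choice of points realizing the distance to $y$ at scale $1/n$. A useful byproduct is that this description does not depend on the particular function $\tau$, which sidesteps the bookkeeping caused by $\tau$ depending on $p$.

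For membership in $\mathcal{D}$ I would argue that the precompactness hypothesis makes $\Lambda_1(p)$ the closure of a precompact set, hence compact; since the $\Lambda_n(p)$ are nested, decreasing, and nonempty, $A(p)$ is a nested intersection of nonempty compact sets, and is therefore nonempty and compact. The pre-attractor condition $\Lambda_1(p)\subset U(p)$ then gives $A(p)\subset U(p)$, so $A(p)$ is a closed nonempty subset of $U(p)$ and lies in $\mathcal{D}$.

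Invariance is the heart of the argument, and I expect the reverse inclusion to be the main obstacle. Forward invariance $\varphi(t,p,A(p))\subset A(\theta_tp)$ is direct from the characterization: if $\varphi(t_k,\theta_{-t_k}p)x_k\to y$, then by continuity and the cocycle identity $\varphi(t,p)y=\lim\varphi(t+t_k,\theta_{-(t+t_k)}(\theta_tp))x_k$, and $t+t_k\to\infty$ exhibits $\varphi(t,p)y\in A(\theta_tp)$. For the reverse inclusion $A(\theta_tp)\subset\varphi(t,p,A(p))$, take $y\in A(\theta_tp)$ realized by $\varphi(s_k,\theta_{-s_k}(\theta_tp))x_k\to y$ with $s_k\to\infty$; splitting off the last time $t$ via the cocycle property rewrites this as $\varphi(t,p)z_k$ with $z_k=\varphi(s_k-t,\theta_{-(s_k-t)}p)x_k$. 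Here precompactness is exactly what lets me extract a convergent subsequence $z_{k_j}\to z\in A(p)$, and continuity of $\varphi(t,p,\cdot)$ yields $y=\varphi(t,p)z$. This subsequence extraction, available only because of the precompactness assumption, is the step that replaces compactness of $X$ in the classical setting, and is where I expect the real work to lie.

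Finally, I would prove pullback convergence by contradiction. If it failed for some $D\in\mathcal{D}$, there would be $\delta>0$, $t_k\to\infty$, and $d_k\in D(\theta_{-t_k}p)\subset U(\theta_{-t_k}p)$ with $dist_X(\varphi(t_k,\theta_{-t_k}p)d_k,A(p))\geq\delta$. For $t_k\geq\tau(p)$ these image points lie in the precompact set $\bigcup_{s\geq\tau(p)}\varphi(s,\theta_{-s}p)U(\theta_{-s}p)$, so a subsequence converges to some $y$ with $dist_X(y,A(p))\geq\delta$; but $y$ is a limit of pullback trajectories originating in $U$, hence $y\in A(p)$ by the characterization, a contradiction. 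Combining the four steps establishes that $A(p)$ is a pullback attractor with respect to $\mathcal{D}$.
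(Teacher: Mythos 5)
Your proof is correct and follows essentially the same route as the paper's: compactness of $A(p)$ as a nested intersection of nonempty compact sets, invariance via the pullback $\Omega$-limit-set argument (which the paper outsources to a citation of Crauel--Flandoli's invariance proof for random $\Omega$-limit sets), and pullback convergence by a precompactness-plus-contradiction argument (which the paper merely asserts). In effect you have written out in full the details that the paper's very terse proof leaves implicit, including the key subsequence extraction for the inclusion $A(\theta_t p)\subset\varphi(t,p,A(p))$, which is exactly the step the precompactness hypothesis is there to supply.
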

\begin{proof}
Denote $U(\tau(p))=\overline{\bigcup \limits_{t\geq
\tau(p)}\varphi(t,\theta_{-t}p)U(\theta_{-t}p)}$. It follows that
\begin{equation}\label{3.6}
    A(p)=\bigcap\limits_{n\in \mathbb{N}}U(n\tau(p)).
\end{equation}
Therefore  $A(p)$ is a nonempty compact set. The invariance of
$A(p)$ can be shown as in the   proof of   invariance for the
$\Omega$-limit set of a random set   \cite{Cra1}.

Moreover, for any $D(p)\in \mathcal{D}$,
\begin{equation*}
\lim\limits_{t\rightarrow
\infty}dist_X(\varphi(t,\theta_{-t}p)D(\theta_{-t}p), A(p))=0,
\end{equation*}
which shows that $A(p)$ is the pullback attractor with respect to
$\mathcal {D}$.
\end{proof}

\begin{remark} In the
definition of \eqref{2.2}, the local attractor needs not be
compact and it can be allowed as an empty set. It is known that
 the attractor is also noncompact in \cite{Aul}. But when the state
space is compact,
 the local attractor is the pullback attractor.
 For more details about pullback attractor
   see \cite{Car,Che1,Sch}.
\end{remark}

In the random case, the pre-attractor can be selected as a forward
invariant open set and repeller is a forward invariant closed set
\cite{Liu1,Liu2}. The following   Lemma \ref{3.13}, which can be
proved as in \cite{Liu1, Liu2}, shows that the basin of attraction
is a backward invariant open set and the repeller is forward
invariant closed set.

%The proof of Lemmas is similar to the case of \cite{Liu1,Liu2}. Here we omit
%the details.

%\begin{lemma}\label{3.15}
%  There exists a forward invariant closed set $\widetilde{U}(p)$ such that
%  \begin{equation}\label{3.12}
%    A(p)=\bigcap\limits_{n\in \mathbb{N}}\widetilde{U}(n\tau(p)),
%\end{equation}
%where $\widetilde{U}(n\tau(p))=\overline{\bigcup \limits_{t\geq
%n\tau(p)}\varphi(t,\theta_{-t}p)\widetilde{U}(\theta_{-t}p)}$.
%\end{lemma}

\begin{lemma}\label{3.13}
The basin of attraction $B(A,U)$ is a backward invariant open set
and the repeller $R$ is a forward invariant closed set.
\end{lemma}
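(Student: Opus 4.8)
The plan is to handle the four assertions separately, getting openness of the basin directly from continuity of the cocycle, and then extracting the invariance properties from the cocycle identity. First I would rewrite the basin as a union of preimages,
$$B(A,U)(p)=\bigcup_{t\geq 0}\{x\in X:\varphi(t,p,x)\in U(\theta_t p)\}=\bigcup_{t\geq 0}\varphi(t,p,\cdot)^{-1}\bigl(U(\theta_t p)\bigr).$$
Since a pre-attractor $U(\theta_t p)$ is open and $\varphi(t,p,\cdot)$ is continuous for each fixed $t$, every member of the union is open, so $B(A,U)(p)$ is open. Consequently its complement $R(p)=X-B(A,U)(p)$ is closed, which disposes of two of the four claims at once.

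For the forward invariance of the repeller it is cleanest to prove the equivalent preimage inclusion $\varphi(t,p,\cdot)^{-1}(B(A,U)(\theta_t p))\subset B(A,U)(p)$ and then take complements. Indeed, if $\varphi(t,p,x)\in B(A,U)(\theta_t p)$, there is an $s\geq 0$ with $\varphi(s,\theta_t p,\varphi(t,p,x))\in U(\theta_{s+t}p)$; the cocycle identity $\varphi(s+t,p,x)=\varphi(s,\theta_t p,\varphi(t,p,x))$ rewrites the left-hand side, giving $\varphi(s+t,p,x)\in U(\theta_{s+t}p)$ with $s+t\geq 0$, i.e.\ $x\in B(A,U)(p)$. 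Passing to complements yields $R(p)\subset \varphi(t,p,\cdot)^{-1}(R(\theta_t p))$, and applying $\varphi(t,p,\cdot)$ to both sides gives $\varphi(t,p,R(p))\subset R(\theta_t p)$, the desired forward invariance. Note that this step uses only the cocycle identity and requires no invertibility.

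The remaining claim, backward invariance of the basin, is where I expect the one real difficulty. I would show $\varphi(t,p,B(A,U)(p))\supset B(A,U)(\theta_t p)$ as follows: given $y\in B(A,U)(\theta_t p)$, first select a point $x$ with $\varphi(t,p,x)=y$; once such a preimage is in hand, the computation of the previous paragraph shows \emph{automatically} that $x\in B(A,U)(p)$, because the witnessing time $s$ for $y$ composes with $t$ through the cocycle identity to place $\varphi(s+t,p,x)$ in $U(\theta_{s+t}p)$. Thus the only genuine content is the \emph{existence of the preimage}, and this is the main obstacle: it rests on the time-$t$ fibre maps $\varphi(t,p,\cdot)$ being onto, which is precisely the feature available in the invertible-cocycle setting of \cite{Liu1,Liu2} that we are imitating, where the two-sided group action of $\theta$ on the base is paired with invertibility of the fibre maps. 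Once surjectivity of $\varphi(t,p,\cdot)$ is granted, backward invariance follows immediately and the lemma is complete.
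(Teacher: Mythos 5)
Your argument for three of the four assertions is correct and complete: rewriting $B(A,U)(p)=\bigcup_{t\geq 0}\varphi(t,p,\cdot)^{-1}\bigl(U(\theta_t p)\bigr)$ gives openness from continuity of the fibre maps and openness of the pre-attractor fibres; closedness of $R(p)$ follows by complementation; and the preimage inclusion $\varphi(t,p,\cdot)^{-1}\bigl(B(A,U)(\theta_t p)\bigr)\subset B(A,U)(p)$, obtained from the cocycle identity, does yield $\varphi(t,p,R(p))\subset R(\theta_t p)$. For comparison: the paper contains no written proof of this lemma at all — it only asserts that it ``can be proved as in'' \cite{Liu1, Liu2} — so what you have written is precisely the adaptation of that standard argument which the paper gestures at but never carries out. (Incidentally, you use the corrected cocycle identity $\varphi(t+s,p,x)=\varphi(s,\theta_t p,\varphi(t,p,x))$; the paper's own statement of the cocycle property contains a typo.)

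The point where you stop short — backward invariance of the basin, which you reduce to surjectivity of the time-$t$ fibre maps — is indeed the crux, and your diagnosis is accurate: the reduction cannot be avoided, since any point of $B(A,U)(\theta_t p)$ lying outside the range of $\varphi(t,p,\cdot)$ is an outright counterexample to the inclusion $B(A,U)(\theta_t p)\subset\varphi\bigl(t,p,B(A,U)(p)\bigr)$. Within this paper's framework that is a genuine problem with the lemma itself, not merely with your proof: the cocycle is defined only for $t\in\mathbb{T}^+$, the fibre maps are nowhere assumed invertible or onto, and in the paper's own Navier--Stokes application the time-$t$ solution operators are smoothing, hence not surjective on $H$, so backward invariance in the paper's declared sense ($\varphi(t,p,M(p))\supset M(\theta_t p)$) can fail. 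The references \cite{Liu1, Liu2} that the paper leans on work in the invertible (two-sided time) setting, where each fibre map is a bijection — exactly the surjectivity you flag. So your proof is as complete as the statement permits; the missing hypothesis (surjectivity or invertibility of the fibre maps) is a gap in the lemma as stated, which you have correctly isolated rather than papered over.
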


\begin{lemma}\label{3.4}
If  $x(p)\in B(A,U)(p)$ and $x(p)$ is the chain recurrent variable
for $p\in
  P$, then   $x(p)\in A(p)$ for  $p\in P$, where $B(A,U)(p)$
 is the basin of attraction determined by $U(p)$ and $A(p)$.
\end{lemma}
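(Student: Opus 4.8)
The plan is to argue by contradiction, importing Conley's principle that an attractor absorbs every sufficiently fine chain meeting its basin, now in pullback form. Read the hypothesis fibrewise: $x$ is a variable with $x(q)\in B(A,U)(q)$ for every $q\in P$, and the $(\varepsilon,T)$-chain conditions hold in every fibre (as the phrase ``for $p\in P$'' in the statement indicates). Suppose $x$ is chain recurrent yet $x(p)\notin A(p)$ for some $p$. Writing $U(n\tau)(q):=\overline{\bigcup_{s\geq n\tau(q)}\varphi(s,\theta_{-s}q)U(\theta_{-s}q)}$, the local attractor \eqref{2.2} is $A(q)=\bigcap_{n}U(n\tau)(q)$, a nested intersection of closed sets that are compact under the precompactness hypothesis of the first Lemma of Section~\ref{chain}. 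Since $x(p)\notin A(p)$ there is an $N$ with $x(p)\notin W(p)$, where $W:=U(N\tau)$; set $3\delta:=d_X(x(p),W(p))>0$. By \eqref{2.1} one has $W(p)\subset U(\tau)(p)\subset U(p)$, so $W$ is a compact core of $A$ lying strictly inside the open pre-attractor $U$, with $A(p)\subset W(p)$.

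Next I record that $W$ is positively invariant for the pullback flow. From the cocycle identity, rewritten as $\varphi(t,\theta_{-t}q)\circ\varphi(s,\theta_{-t-s}q)=\varphi(t+s,\theta_{-(t+s)}q)$, and the fact that $t+s\geq N\tau$ whenever $s\geq N\tau$, it follows that $\varphi(t,\theta_{-t}q)W(\theta_{-t}q)\subset W(q)$ for all $t\geq0$ and $q\in P$. Together with the pullback attraction proved in the first Lemma of Section~\ref{chain}, this gives the trapping I need: because $W$ is compact and contained in the open set $U$, the map $\varphi$ is uniformly continuous on the compact pieces in play, so one may choose $\varepsilon_0\in F(P\times X)$ and $T_0(q)>0$ with the property that any point lying within $\delta$ of $W(q)$ is carried, after a flow of time $\geq T_0$, back into $W(q)$, while an $\varepsilon_0$-jump keeps it within $\delta$ of $W(q)$. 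The role of $\varepsilon_0\in F(P\times X)$ (continuous in the state for fixed base point) rather than a constant is exactly to make this room available near the compact set $W$.

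With $\varepsilon_0,T_0$ fixed I would show that every $(\varepsilon_0,T_0)$-chain starting from $x$ is eventually confined to the $\delta$-neighbourhood of $W$, so it cannot return to $x(p)$, which sits at distance $3\delta$ from $W(p)$. Entering: since $x$ is a section of the basin, pullback attraction gives $\varphi(t,\theta_{-t}q)x(\theta_{-t}q)\to A(q)\subset W(q)$ for each $q$, so for $T_0$ large the first flow-step of the chain lands within $\delta$ of $W(q)$ in every fibre. Staying: the positive invariance of $W$ and the trapping estimate of the previous step propagate this fibrewise along the chain — if a landing point is within $\delta$ of $W$ in every fibre, so is the next one, since the comparison in the chain holds in every fibre $q$. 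Hence the terminal value $x_{n+1}(p)$ lies within $\delta$ of $W(p)$ and cannot equal $x(p)$, contradicting that the chain begins and ends at $x(p)$. As $\varepsilon_0,T_0$ are admissible test data, $x$ is not chain recurrent, a contradiction; therefore $x(p)\in A(p)$.

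The main obstacle is the pullback and fibrewise bookkeeping in the absence of compactness of $X$. The chain condition compares $\varphi(t_i,\theta_{-t_i}p)x_i(\theta_{-t_i}p)$ with $x_{i+1}(p)$, so the sections $x_i$ are sampled at the shifted base points $\theta_{-t_i}p$, and the forward description of the basin $B(A,U)$ (entry into $U$ along forward orbits, cf.\ Lemma~\ref{3.13}) must be reconciled with the pullback description of $A$. This is precisely what the positive invariance of the nonautonomous core $W$ and the fibrewise pullback attraction are for: they let the ``enter and then stay'' argument run simultaneously in every fibre $\theta_{-t}p$. Ensuring that the single admissible pair $(\varepsilon_0,T_0)$ works uniformly across these fibres — which would be automatic on a compact state space but is not here — is the delicate point, and it is the compactness of $W$ (not of $X$) and the uniform continuity of $\varphi$ on $W$ that rescue the argument.
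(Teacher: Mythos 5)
Your proposal diverges from the paper's argument and has two genuine gaps. First, you build your compact core $W=U(N\tau)$ by importing ``the precompactness hypothesis of the first Lemma of Section~\ref{chain}'', but Lemma~\ref{3.4} carries no such hypothesis: it is stated for an arbitrary pre-attractor on a non-compact state space, and it is later applied (in the proof of Theorem~\ref{3.8}) to the attractors $A_x$ determined by the chain-constructed sets $U_x$ of \eqref{3.2}, for which no precompactness is available; indeed the remark following that first lemma stresses that the local attractor need not be compact and may even be empty. Everything in your argument that hinges on compactness of $W$ --- the positive distance from $W(q)$ to $X\setminus U(q)$, the uniform trapping pair $(\varepsilon_0,T_0)$, the uniform continuity of $\varphi$ near $W$ --- therefore has no footing; and even granting compactness of each fibre $W(q)$, you would still need these estimates uniformly over the infinitely many fibres $\theta_{-t_i}p$ visited by a chain, which fibrewise compactness does not provide. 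The paper avoids all of this by constructing a state-dependent gauge $\varepsilon=\min\{\delta,1\}$ with $\delta(p,x)=\frac12\bigl(d_X(x,\bigcup_{t\ge\tau(p)}\varphi(t,\theta_{-t}p)U(\theta_{-t}p))+d_X(x,X-U(p))\bigr)$, which makes the ``jumps starting in $U$ stay in $U$'' step purely metric, with no compactness at all.

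Second, your ``entering'' step asserts that pullback attraction carries the basin section $x$ toward $W$: ``$\varphi(t,\theta_{-t}q)x(\theta_{-t}q)\to A(q)$ since $x$ is a section of the basin''. But in this paper that attraction property of basin points is stated only when $\mathbb{T}$ is two-sided and $X$ is compact (see the sentence immediately after the definition of the basin); in the non-compact setting the basin is defined purely by forward entry, $\varphi(s,p)x(p)\in U(\theta_sp)$ for some $s\ge0$, and membership of $x(q)$ in $B(A,U)(q)$ gives no control whatsoever on the pullback images $\varphi(t_1,\theta_{-t_1}p)x(\theta_{-t_1}p)$ appearing in the first link of a chain. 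This is exactly the difficulty your last paragraph flags but does not resolve. The paper resolves it by a different route: it first proves the lemma for points of $U(p)$ itself (an $(\varepsilon/n,m\tau)(p)$-chain stays in $U$, and its last jump forces $d(x(p),\bigcup_{t\ge m\tau(p)}\varphi(t,\theta_{-t}p)U(\theta_{-t}p))<1/n$ for all $m,n$, hence $x(p)\in A(p)$); then, for a general basin point, it invokes forward invariance of $CR_\varphi$ to get $\varphi(s,p)x(p)\in A(\theta_sp)$, and closes with a connectedness argument on the time sets $Y=\{t\ge0\mid \pi(t,(p,x(p)))\in U\}$ (open, nonempty) and $Z=\{t\ge0\mid \pi(t,(p,x(p)))\in A\}$ (closed, nonempty) to pull the conclusion back to $t=0$. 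As written, your contradiction scheme proves at best a weaker statement under hypotheses the lemma does not grant, and it cannot serve the role Lemma~\ref{3.4} plays in Theorem~\ref{3.8}.
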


\begin{proof} The idea is from \cite{Cho,Hur2}.
 For $\tau(p)$ satisfies \eqref{2.1}, we have
\begin{eqnarray*}
\bigcup \limits_{t\geq \tau(p)}\varphi(t,\theta_{-t}p)
U(\theta_{-t}p)\subset U(p).
\end{eqnarray*}
We need to show that there exists an $\varepsilon(p,x)>0$ such that
$\varepsilon(p,x) \leq 1$ and
\begin{eqnarray}
B(\varphi(t,\theta_{-t}p)x(\theta_{-t}p),
\varepsilon(p,\varphi(t,\theta_{-t}p)x(\theta_{-t}p)))\subset U(p)
\label{3.1}
\end{eqnarray}
 for all $x(p)\in U(p)$ and $t \geq \tau(p)$.

Let us construct such a  function $\varepsilon$. Define
$\delta(p,x)$ by
\begin{eqnarray*}
\delta(p,x)=\frac 1 2 \{d_X(x,{\bigcup \limits_{t\geq
\tau(p)}\varphi(t,\theta_{-t}p) U(\theta_{-t}p})+d_X(x, X-U(p))\}.
\end{eqnarray*}
Then $\delta(p,x)>0$ since $x\notin X-U(p)$ if $x\in {\bigcup
\limits_{t\geq \tau(p)}\varphi(t,\theta_{-t}p)
U(\theta_{-t}p)}\subset U(p)$. Let $x(p)\in U(p)$ and $t\geq
\tau(p)$. For any $y(p)\in B(\varphi(t,\theta_{-t}p)x(\theta_{-t}p),
\delta(p,\varphi(t,\theta_{-t}p)x(\theta_{-t}p)))$,
\begin{eqnarray*}
d_X(\varphi(t,\theta_{-t}p)x(\theta_{-t}p),y(p))<\delta(p,\varphi(t,\theta_{-t}p)x(\theta_{-t}p))=\frac
1 2 d_X(\varphi(t,\theta_{-t}p)x(\theta_{-t}p), X-U(p)).
\end{eqnarray*}
Thus we have
\begin{eqnarray*}
&&2d_X(\varphi(t,\theta_{-t}p)x(\theta_{-t}p),y(p))<d_X(\varphi(t,\theta_{-t}p)x(\theta_{-t}p),
X-U(p))\\& &\leq
d_X(\varphi(t,\theta_{-t}p)x(\theta_{-t}p),y(p))+d_X(y(p), X-U(p)).
\end{eqnarray*}
Since $d_X(y(p),X-U(p))>
d_X(\varphi(t,\theta_{-t}p)x(\theta_{-t}p),y(p))\geq 0$, we have
$y(p) \in U(p)$. Hence
\begin{eqnarray*}
B(\varphi(t,\theta_{-t}p)x(\theta_{-t}p),
\varepsilon(p,\varphi(t,\theta_{-t}p)x(\theta_{-t}p)))\subset U(p).
\end{eqnarray*}
and $\varepsilon=\min\{\delta,1\}$ is the desired function.

 Let  $m$, $n$ be positive integers.
Select $\varepsilon >0$ satisfies \eqref{3.1} for $x(p)\in U(p)$,
$p\in P$. If $x(p)\in CR_\varphi(p)$, there is an $(\frac
{\varepsilon} n , m{\tau})(p)$-chain $\{x_1(p),\cdots,x_k(p),$ $
x_{k+1}(p);$$ t_1, \cdots,t_k\}$ from $x(p)$ back to $x(p)$.  Since
\begin{eqnarray*}
&&d(\varphi(t_1,\theta_{-{t_1}}p )\circ
x_1(\theta_{-t_1}p),x_2(p))\\
&<&\frac 1 n \varepsilon(p,\varphi(t_1,\theta_{-{{{t_1}}}}p)
x_1(\theta_{-{t_1}}p))\\
&\leq & \varepsilon(p,\varphi(t_1,\theta_{{{-{t_1}}}}p
)x_1(\theta_{-{t_1}}p)).
\end{eqnarray*}
we have
\begin{eqnarray*}
x_2(p)\in  B(\varphi
({t_1},\theta_{-{t_1}}p)x(\theta_{-{t_1}}p),\varepsilon(p,\varphi
(t_1,\theta_{-{t_1}}p)x(\theta_{-{t_1}}p))) \subset U(p)
\end{eqnarray*}
by  \eqref{3.1}. Thus $x_k(p)\in U(p)$ by induction. Since
\begin{eqnarray*}
d(\varphi(t_k,\theta_{-{t_k}}p )x_k(\theta_{-{t_k}}p),x_{k+1}(p))
<\frac 1 n \varepsilon(p,\varphi(t_k,\theta_{-{t_k}}p)
x_k(\theta_{-{t_k}}p))\leq \frac 1 n,
\end{eqnarray*}
we obtain
\begin{eqnarray*}
d(x(p),\bigcup \limits_{t\geq m{\tau}(p)}\varphi(t,\theta_{-t}p)
U(\theta_{-t}p)) \leq d(x(p),\varphi(t_k,\theta_{-{t_k}}p
)x_k(\theta_{-{t_k}}p)) < \frac 1 n.
\end{eqnarray*}
Thus
\begin{eqnarray*}
d(x(p),\bigcup \limits_{t\geq m{\tau}(p)}\varphi(t,\theta_{-t}p)
U(\theta_{-t}p) )=0.
\end{eqnarray*}
It follows that  $x(p)\in Cl{\bigcup \limits_{t\geq
m{\tau}(p)}\varphi(t,\theta_{-t}p) U(\theta_{-t}p)} $. This implies
\begin{eqnarray*}
x(p)\in \bigcap \limits_{m\in \mathbb{N}} Cl{\bigcup \limits_{t\geq
m{\tau}(p)}\varphi(t,\theta_{-t}p) U(\theta_{-t}(p))}=A(p).
\end{eqnarray*}

Now
 $x(p)\in B(A,U)(p)$, $p\in
  P$, which implies there exists $s\geq0$ such that
\begin{eqnarray}\label{3.15}
  \varphi(s,p)x(p)\in{U( \theta_{s}p)}
\end{eqnarray}
for fixed $p\in P$. By a same method that given in \cite{Chu,Hur2},
we can conclude that $CR_\varphi$ is forward invariant. Hence $
\varphi(s,p)x(p)\in CR_\varphi(\theta_sp)$. Combining the above
prove process we have
\begin{eqnarray}\label{3.16}
\varphi(s,p)x(p)\in{A( \theta_{s}p)}.
\end{eqnarray}
 Let
$Y=\{t\geq 0\mid \pi(t,(p,x(p)))\in U\}$ and  $Z=\{t\geq 0\mid
\pi(t,(p,x(p)))\in A\}$. From \eqref{3.15} and \eqref{3.16} we know
that $Y\neq \emptyset$ and $Z\neq \emptyset$. By the continuity of
$\pi$, $Y$ is open in $[0,+\infty)$, while $Z$ is closed. So
$Y=Z=[0,+\infty)$, which shows  $x(p)\in A(p)$.
\end{proof}

We now present the following result on chain recurrent set.
\begin{theorem}\label{3.8}
  (Chain recurrent set for NDS)\\
   Let
  $U(p)$ be an arbitrary  pre-attractor,  $A(p)$ be the local
  attractor determined by $U(p)$,  and $B(A,U)(p)$ be the basin of
  attraction determined by $U(P)$ and $A(p)$. Then the following
  decomposition holds:
  $$X-CR_\varphi(p)=
\bigcup  [B(A,U)(p)-A(p)],$$
  where the union is taken over all local attractors  $A(p)$ determined by
  pre-attractors.
\end{theorem}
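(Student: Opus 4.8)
The plan is to establish the two inclusions separately, the containment $\bigcup[B(A,U)(p)-A(p)]\subseteq X-CR_\varphi(p)$ being the routine half and the reverse containment the substantial one. The routine half I would argue by contraposition from Lemma \ref{3.4}. Suppose $x(p)\in B(A,U)(p)-A(p)$ for some pre-attractor $U(p)$ with associated local attractor $A(p)$. Were $x(p)$ chain recurrent, then, since $x(p)\in B(A,U)(p)$, Lemma \ref{3.4} would force $x(p)\in A(p)$, contradicting $x(p)\notin A(p)$. Hence $x(p)\in X-CR_\varphi(p)$, and as this holds for every pre-attractor, the whole union lies in $X-CR_\varphi(p)$.

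For the reverse containment $X-CR_\varphi(p)\subseteq\bigcup[B(A,U)(p)-A(p)]$ the task is, given a point that fails to be chain recurrent, to manufacture a pre-attractor whose basin captures it but whose local attractor misses it. Fix $x(p)\notin CR_\varphi(p)$. By the definition of the chain recurrent set there are $\varepsilon_0\in F(P\times X)$, which I may normalize so that $\varepsilon_0\le 1$, and $T_0(p)>0$ such that no $(\varepsilon_0,T_0)(p)$-chain runs from $x(p)$ back to $x(p)$. Following the autonomous construction of Conley \cite{Con} in the form adapted in \cite{Cho,Hur2}, I would introduce, fibrewise, the chain-reachable set
\begin{equation*}
\Omega(q)=\overline{\{\,y(q):\text{there is an }(\varepsilon_0,T_0)(q)\text{-chain from }x(q)\text{ to }y(q)\,\}},\qquad q\in P,
\end{equation*}
thicken it to the open nonautonomous set
\begin{equation*}
U(q)=\bigcup_{y(q)\in\Omega(q)}B\!\left(y(q),\tfrac12\varepsilon_0(q,y(q))\right),
\end{equation*}
and take $A(p)$ to be the local attractor determined by $U(p)$ through \eqref{2.2}.

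Three verifications then remain, all carried out by adapting the arguments of \cite{Cho,Hur2} to the pullback setting. First, that $U$ satisfies the pre-attractor property \eqref{2.1}: here the collar estimate \eqref{3.1} from the proof of Lemma \ref{3.4} is the crucial tool, guaranteeing that a forward pullback image of $U$ together with a jump of size at most $\varepsilon_0$ stays inside $U$, whence $\overline{\bigcup_{t\ge\tau(p)}\varphi(t,\theta_{-t}p)U(\theta_{-t}p)}\subset U(p)$ for a suitable $\tau(p)$. Second, $x(p)\in B(A,U)(p)$: for each $t\ge T_0(p)$ the one-step chain at the fibre $\theta_tp$ issuing from $x$ has endpoint exactly $\varphi(t,\theta_{-t}(\theta_tp))x(\theta_{-t}(\theta_tp))=\varphi(t,p)x(p)$, so $\varphi(t,p)x(p)\in\Omega(\theta_tp)\subset U(\theta_tp)$ and $x(p)$ lies in the basin. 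Third, $x(p)\notin A(p)$: one shows $A(p)\subseteq\Omega(p)$ by tracking forward pullback images of $U$ back to chains issuing from $x$, whereas $x(p)\notin\Omega(p)$, since a reachable point within $\tfrac12\varepsilon_0$ of $x(p)$ would, after one further pullback leg and jump, close up into an $(\varepsilon_0,T_0)(p)$-chain from $x(p)$ to itself, contradicting the choice of $(\varepsilon_0,T_0)$. Together these give $x(p)\in B(A,U)(p)-A(p)$, completing the reverse containment.

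The principal obstacle is the nonautonomous bookkeeping hidden in the first and third verifications. Unlike the autonomous case, a chain at the fibre $p$ reads its intermediate maps $x_i$ at the shifted fibres $\theta_{-t_i}p$, so concatenating chains and transporting the chain-reachable set $\Omega$ along the pullback evolution $\varphi(t,\theta_{-t}p)$ must be done while keeping the junction points anchored over the correct fibre and the tolerance $\varepsilon_0(q,\cdot)$ under control uniformly in $q\in P$. Making the inclusion $A(p)\subseteq\Omega(p)$ rigorous, and ensuring that the closure defining $\Omega(p)$ still excludes $x(p)$ after the $\tfrac12\varepsilon_0$ thickening, is precisely where the normalization $\varepsilon_0\le1$ and the strict inequalities in the chain definition must be exploited.
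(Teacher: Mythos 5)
Your overall strategy coincides with the paper's: the easy inclusion is obtained by contraposition from Lemma \ref{3.4} exactly as in the paper, and for the hard inclusion the paper likewise builds, from the pair $(\varepsilon_0,T_0)$ witnessing non-recurrence of $x(p)$, the set of all endpoints of $(\varepsilon_0,T_0)(p)$-chains starting at $x(p)$, shows it is a pre-attractor, and checks that $x(p)$ lies in its basin but not in the associated local attractor. The difference is your packaging, and that packaging is where genuine gaps appear. The paper takes $U_x(p)$ to be literally the set of chain endpoints, which by \eqref{3.2} is an increasing union of open balls centered at pullback flow images, hence already open; no closure and no thickening are needed. Your $U(q)$, the $\tfrac12\varepsilon_0$-thickening of the closure $\Omega(q)$, can be strictly larger than the reachable set: if $y(q)$ is reachable and $d(w,y(q))<\tfrac12\varepsilon_0(q,y(q))$, the point $w$ need not be reachable, because the chain tolerance is evaluated at the flow image $\varphi(t_n,\theta_{-t_n}q)x_n(\theta_{-t_n}q)$, not at $y(q)$, and the last jump of the chain reaching $y(q)$ may have no slack left. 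Consequently chain concatenation --- the only mechanism available --- does not control the forward pullback images of all of $U$, and your first verification (the pre-attractor property \eqref{2.1}) cannot be completed as sketched. Note also that invoking \eqref{3.1} here is circular: that estimate is derived in Lemma \ref{3.4} under the assumption that one already has a pre-attractor in hand.

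Your third verification has the same defect in sharper form. From $x(p)\notin CR_\varphi(p)$ you may conclude that $x(p)$ is not a chain endpoint, but not that $x(p)\notin\Omega(p)$: the reachable set is open, and $x(p)$ could perfectly well lie on its boundary, hence in its closure. The proposed repair --- ``one further pullback leg and jump closes the chain'' --- fails for two reasons: every new leg must flow for a time $t\geq T_0$, and $\varphi(t,\theta_{-t}p)y(\theta_{-t}p)$ need not be anywhere near $x(p)$ even when $y(p)$ is; and replacing the endpoint $y(p)$ of the existing chain by $x(p)$ requires $d(\varphi(t_n,\theta_{-t_n}p)x_n(\theta_{-t_n}p),x(p))<\varepsilon_0(p,\varphi(t_n,\theta_{-t_n}p)x_n(\theta_{-t_n}p))$, which the triangle inequality does not deliver when the original jump uses its full tolerance. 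The paper resolves exactly this difficulty with a continuity device: a function $\delta\leq\varepsilon/2$ such that $d(x,y)<\delta(p,x)$ implies $\varepsilon(p,y)>\tfrac12\varepsilon(p,x)$, which shows that every point of $\overline{\bigcup_{t\geq\tau(p)}\varphi(t,\theta_{-t}p)U_x(\theta_{-t}p)}$ is itself a chain endpoint, hence in $U_x(p)$; then $A_x(p)\subset U_x(p)$ together with $x(p)\notin U_x(p)$ gives $x(p)\notin A_x(p)$ at once. If you drop the closure and the thickening and work with the open reachable set directly, supplying this $\delta$-argument, your proof becomes the paper's.
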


\begin{proof}
   Suppose $x$ is a
map from $P$ to $X$,  the function $\varepsilon(p,x)>0$ and
$\tau(p)>0$. Define
 \begin{eqnarray*}
 U_1(p):&=&\bigcup \limits_{t\geq
 \tau(p)}B(\varphi(t,\theta_{-t}p)x(\theta_{-t}p),\varepsilon(p,\varphi(t,\theta_{-t}p)x(\theta_{-t}p))),\\
 U_2(p):&=&\bigcup \limits_{t\geq \tau(p)}\bigcup \limits_{y(p)\in
 U_1(p)}B(\varphi(t,\theta_{-t}p)y(\theta_{-t}p),\varepsilon(p,\varphi(t,\theta_{-t}p)y(\theta_{-t}p))),\\
             &\cdots&\\
 U_n(p):&=&\bigcup \limits_{t\geq \tau(p)}\bigcup \limits_{y(p)\in
 U_{n-1}(p)}B(\varphi(t,\theta_{-t}p)y(\theta_{-t}p),\varepsilon(p,\varphi(t,\theta_{-t}p)y(\theta_{-t}p))),\\
             &\cdots.&
 \end{eqnarray*}
Since   $ U_n(p)$ $(n\geq 1)$ are all
 open sets. So the set
\begin{eqnarray}
U_x(p):=\bigcup \limits_{n\in \mathbb{N}}U_n(p)\label{3.2}
\end{eqnarray} is an  open set.
From the construction of $U_x(p)$ we   see that $U_x(p)$ is the
set of all possible end points of $(\varepsilon, \tau)(p)$-chains
that begin at $x(p)$. In the following we prove that $U_x(p)$ is a
pre-attractor and it determines a local attractor $A_x(p)$. Since
there exists a map $\delta: P\times X\rightarrow (0,+\infty)$ with
$\delta\leq \frac{\varepsilon} 2$ such that
$\varepsilon(p,y(p))>\frac 1 2 \varepsilon(p,x(p))$ when
$d(x(p),y(p))<\delta(p, x(p))$.  For
$B(y(p),\delta(p,y(p)))\bigcap
\varphi(t,\theta_{-t}p)U_x(\theta_{-t}p)\neq \emptyset$, $t\geq
\tau(p)$. There exists $z(p)\in U_x(p)$ such that
\begin{equation*}
  d(\varphi(t,\theta_{-t}p)z(\theta_{-t}p),y(p))<\delta(p,y(p)),
  \quad   t\geq \tau(p).
\end{equation*}
Since
$d(\varphi(t,\theta_{-t}p)z(\theta_{-t}p),y(p))<\delta(p,y(p))$, we
have $\varepsilon(p,\varphi(t,\theta_{-t}p)z(\theta_{-t}p))>\frac 1
2 \varepsilon(p,y(p))$. Thus
\begin{eqnarray*}
  d(\varphi(t,\theta_{-t}p)z(\theta_{-t}p),y(p))<\delta(p,y(p))\leq \frac 1 2
  \varepsilon(p,y(p))<
  \varepsilon(p,\varphi(t,\theta_{-t}p)z(\theta_{-t}p)).
\end{eqnarray*}
Hence there exists an $(\varepsilon,\tau)(p)$-chain from $x(p)$ to
$y(p)$. This means that $$\overline{\bigcup\limits_{t\geq
\tau(p)}\varphi(t,\theta_{-t}p)U_x(\theta_{-t}p)}\subset U_x(p).$$

 If $x(p)\in X - CR_\varphi
 (p)$,
 then for arbitrary $\varepsilon(p,x)>0$ and  $\tau(p)>0$  there
 exists no
$(\varepsilon,\tau)(p)$-chain  begins and ends at $x(p)$. Take $U_x$
defined by (\ref{3.2}), then by the construction of $U_x$, it is
easy to see that $x(p) \in B(A_x,U_x)(p)$
 and $ x(p)\notin U_x(p) $. Hence
$$x(p) \in  B(A_x,U_x)(p)-A_x(p)$$ for $p\in P$.
So
\begin{equation}\label{3.3}
 X -CR_\varphi(p) \subset \bigcup [B(A,U)(p)-A(p)].
\end{equation}

If  $x(p)$ is a chain recurrent variable and $x(p)\in B(A,U)(p)$,
then by Lemma \ref{3.4}, we have $x(p)\in A(p)$. Hence
\begin{equation*}
  x(p)\in  X - CR_\varphi(p) \quad \mbox{whenever} \quad  x(p)\in B(A,U)(p)-A(p).
\end{equation*}
So
\begin{equation}\label{3.5}
 \bigcup [B(A,U)(p)-A(p)] \subset X - CR_\varphi(p).
\end{equation}

 Therefore, by \eqref{3.3} and \eqref{3.5}, we obtain that
 $$ X - CR_\varphi(p) = \bigcup [B(A,U)(p)-A(p)]. $$
\end{proof}

\begin{coro}\label{3.9}
  Assume that $U(p)$ is a pre-attractor, $A(p)$ is the
  local attractor determined by $U(p)$,  and $R(p)$ is the repeller
  corresponding to $A(p)$ with respect to $U(p)$. Then
  $$CR_\varphi(p)=\bigcap [A(p)\bigcup R(p)],$$
  where the intersection is taken over all local attractrs.
\end{coro}

\begin{lemma}\label{3.10}
Let $U$ and $A$ be   nonautonomous sets. If  $U$ is the
pre-attractor for the skew-product system,
  then
  $$
  A(p)=\bigcap \limits_{t\geq T}\overline {\bigcup \limits_{s\geq t}\varphi(s,\theta_{-s}p, {U(\theta_{-s}p)})}
  $$
is the local attractor with the property  $\bigcup \limits_{p\in
P} \{p \}\times
  A(p) \subset A$, where we denote $(p,\emptyset)=\emptyset$.
\end{lemma}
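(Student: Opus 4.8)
The plan is to move back and forth between the autonomous local--attractor construction for the skew--product semiflow $\pi$ on $\mathbb U=P\times X$ and the fibre-wise pullback construction on $X$; the only genuine point is the discrepancy between closures taken in the product $\mathbb U$ and closures taken in a fibre $X$. Throughout I would use the single elementary fact that for every $S\subset\mathbb U$ and every $p\in P$,
\[
\{p\}\times\overline{S(p)}\subset\overline S,\qquad\text{equivalently}\qquad \overline{S(p)}\subset(\overline S)(p),
\]
where $\overline{S(p)}$ is the closure in $X$ and $\overline S$ the closure in $\mathbb U$; this holds because $x_n\to x$ in $X$ with $x_n\in S(p)$ forces $(p,x_n)\to(p,x)$ in $\mathbb U$. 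In general the inclusion is strict, since $(p,x)\in\overline S$ may be approached by points $(p_n,x_n)\in S$ with $p_n\to p$ but $p_n\ne p$.

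First I would record the fibre identity for the skew--product. Since $\pi(s,(q,y))=(\theta_s q,\varphi(s,q,y))$ and $\theta$ is a group of homeomorphisms, the fibre of $\pi(s,U)$ over $p$ is reached precisely from the fibre over $\theta_{-s}p$; using the cocycle property and the group law one gets
\[
\big(\pi(s,U)\big)(p)=\varphi(s,\theta_{-s}p,U(\theta_{-s}p)),\qquad \Big(\bigcup_{s\ge t}\pi(s,U)\Big)(p)=\bigcup_{s\ge t}\varphi(s,\theta_{-s}p,U(\theta_{-s}p)).
\]
Thus the fibre of the forward image of the autonomous system $\pi$ is exactly the pullback expression appearing in the definition of $A(p)$.

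Next I would verify that $U(p)$ is a pre-attractor on $X$, so that $A(p)$ is genuinely a local attractor in the sense of \eqref{2.2}. The hypothesis that $U$ is a pre-attractor for $\pi$ means $\overline{\bigcup_{t\ge T}\pi(t,U)}\subset U$ for some $T>0$ (closure in $\mathbb U$), and fibres of open sets are open, so each $U(p)$ is open. Passing to the $p$-fibre and applying the elementary fact with $S=\bigcup_{t\ge T}\pi(t,U)$,
\[
\overline{\bigcup_{t\ge T}\varphi(t,\theta_{-t}p,U(\theta_{-t}p))}=\overline{S(p)}\subset(\overline S)(p)\subset U(p),
\]
which is precisely condition \eqref{2.1} with the constant choice $\tau(p)=T$. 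Because the sets $\overline{\bigcup_{s\ge t}\varphi(s,\theta_{-s}p,U(\theta_{-s}p))}$ decrease in $t$, the displayed intersection over $t\ge T$ coincides with the discrete intersection over $t=nT$ of \eqref{2.2}, so $A(p)$ is indeed the local attractor determined by $U(p)$.

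Finally, for the asserted inclusion I would fix $p$ and, for each $t\ge T$, apply the elementary fact to $S_t=\bigcup_{s\ge t}\pi(s,U)$ to get $\{p\}\times\overline{S_t(p)}\subset\overline{S_t}$, i.e.
\[
\{p\}\times\overline{\bigcup_{s\ge t}\varphi(s,\theta_{-s}p,U(\theta_{-s}p))}\subset\overline{\bigcup_{s\ge t}\pi(s,U)}.
\]
Intersecting over $t\ge T$ gives $\{p\}\times A(p)\subset\bigcap_{t\ge T}\overline{\bigcup_{s\ge t}\pi(s,U)}=A$, where $A$ is the local attractor of the skew--product inside $U$; taking the union over $p\in P$ yields $\bigcup_{p\in P}\{p\}\times A(p)\subset A$, and the convention $(p,\emptyset)=\emptyset$ makes the statement trivially consistent when some $A(p)$ is empty. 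The main obstacle, and the reason the conclusion is only an inclusion, is exactly the closure discrepancy noted at the outset: both the fibre-closure inclusion $\overline{S_t(p)}\subset(\overline{S_t})(p)$ and the passage $(\bigcap_t\overline{S_t})(p)\supset\bigcap_t\overline{S_t(p)}$ may be strict, so the fibre-wise attractor $A(p)$ can be a proper subset of the $p$-fibre of the skew--product attractor $A$.
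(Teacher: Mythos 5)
Your proof is correct and follows essentially the same route as the paper's: both hinge on the identification $\bigl(\pi(s,U)\bigr)(p)=\varphi(s,\theta_{-s}p,U(\theta_{-s}p))$ together with the fact that fibre closures embed into product closures, used first to show $U(p)$ satisfies the pre-attractor condition \eqref{2.1} and then to obtain $\{p\}\times A(p)\subset A$. The only difference is presentational: the paper runs the argument with explicit convergent sequences (including a tacit extraction of a sequence $s_n\to\infty$ for the attractor inclusion), whereas you isolate the general fact $\{p\}\times\overline{S(p)}\subset\overline{S}$ and intersect the fixed-$t$ inclusions, which is a slightly cleaner rendering of the same mechanism.
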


\begin{proof} Since $U$ is the pre-attractor for the skew-product system, there exists $T\geq 0$
such that $\overline{\bigcup \limits_{s\geq T} \pi(s,U)}\subset U$.
Suppose $y\in \overline{ \bigcup \limits_{s\geq T}
\varphi(s,\theta_{-s} p, U(\theta_{-s}p))}$, there exists $s_n\geq
T$ and $x_n\in U(\theta_{-s_n}p)$ such that
{\setlength\arraycolsep{2pt}
\begin{eqnarray}\label{3.14}
(p,y)&=&\lim_{n\rightarrow \infty}(p,\varphi(s_n,\theta_{-s_n}p,
x_n))\nonumber\\
&=&\lim_{n\rightarrow \infty}\pi(s_n,(\theta_{-s_n}p,x_n))\nonumber \\
&\in & U=\bigcup\limits_{p\in P}\{p\}\times U(p).
\end{eqnarray}}
We thus conclude that $y\in U(p)$.  Thus $U(p)$ is the pre-attractor
and $A(p)$ is the corresponding local attractor.

Assume that $A=\bigcup \limits_{p\in P} \{p \}\times
  A'(p)$  with $A'(p)=\{x \mid (p,x)\in A \; \mbox{for fixed} \; p\}$.
  If $A(p)\neq \emptyset$,
  let the sequence $s_n\rightarrow +\infty$ in \eqref{3.14}.
   As in the   proof of \eqref{3.14}, we see that $A(p)\subset
  A'(p)$. If $A(p)=\emptyset$, we also have $A(p)\subset
  A'(p)$. Hence $\bigcup \limits_{p\in P} \{p \}\times
  A(p) \subset A$.
\end{proof}

\begin{coro}\label{3.11} With the convention
  $(p,\emptyset)=\emptyset$,   we have the following relation:
  $\bigcup \limits_{p\in P}\{p\}\times CR_{\varphi}(p) \subset CR(\pi)$.
\end{coro}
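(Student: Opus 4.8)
The plan is to avoid manipulating chains directly and instead compare the attractor--repeller characterizations of the two chain recurrent sets. A direct chain argument is awkward here: an $(\varepsilon,T)(p)$-chain for $\varphi$ is a \emph{pullback} object living entirely over the single fibre $p$ (its jumps take place in $X$ and its flow steps use $\varphi(t_i,\theta_{-t_i}p)$), whereas a chain for the autonomous semiflow $\pi$ runs \emph{forward} in $\mathbb{U}=P\times X$ and moves the base coordinate under $\theta$. Converting one into the other is not transparent, so instead I would exploit Corollary \ref{3.9} together with Lemma \ref{3.10}.

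First, regard $\pi$ as an autonomous dynamical system on $\mathbb{U}$, equivalently as a nonautonomous dynamical system over a one--point base (for which $\theta\equiv\mathrm{id}$ and pullback chains coincide with ordinary forward chains for $\pi$). Corollary \ref{3.9} then applies to $\pi$ and yields $CR(\pi)=\bigcap[\tilde A\cup\tilde R]$, the intersection running over all attractor--repeller pairs $(\tilde A,\tilde R)$ of $\pi$ determined by pre--attractors $\tilde U\subset\mathbb{U}$. Hence it suffices to fix one such pair and show that $(p,x)\in\tilde A\cup\tilde R$ whenever $x\in CR_\varphi(p)$.

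Next I would transfer the pair to the fibre. Writing $U(p)=\{x:(p,x)\in\tilde U\}$, Lemma \ref{3.10} shows that $U$ is a pre--attractor for $\varphi$ whose associated local attractor $A(p)$ satisfies $\bigcup_{p}\{p\}\times A(p)\subset\tilde A$, i.e. $A(p)\subset\tilde A(p):=\{x:(p,x)\in\tilde A\}$. For the repellers, the skew--product identity $\pi(t,(p,x))=(\theta_tp,\varphi(t,p,x))$ gives $\pi(t,(p,x))\in\tilde U$ exactly when $\varphi(t,p,x)\in U(\theta_tp)$; therefore $(p,x)\in B(\tilde A,\tilde U)$ iff $x\in B(A,U)(p)$, so the fibre of $\tilde R=\mathbb{U}-B(\tilde A,\tilde U)$ over $p$ is precisely $R(p)=X-B(A,U)(p)$. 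Since $A(p)$ is one of the local attractors of $\varphi$, Corollary \ref{3.9} gives $x\in A(p)\cup R(p)\subset\tilde A(p)\cup\tilde R(p)$, that is $(p,x)\in\tilde A\cup\tilde R$. As the pair was arbitrary, $(p,x)\in CR(\pi)$, and the asserted inclusion $\bigcup_{p}\{p\}\times CR_\varphi(p)\subset CR(\pi)$ follows.

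The main obstacle I expect is the first reduction: making the representation $CR(\pi)=\bigcap[\tilde A\cup\tilde R]$ legitimate for $\pi$ on the non--compact space $\mathbb{U}$, which is exactly why it matters that Theorem \ref{3.8} and Corollary \ref{3.9} were proved with no compactness hypothesis. The remaining work — the fibrewise matching of basins and repellers through the skew--product formula — is then a routine but necessary verification.
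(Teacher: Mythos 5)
Your proposal is correct and takes essentially the same route as the paper: the paper's one-sentence proof invokes exactly the ingredients you assemble, namely Theorem \ref{3.8} (in your case its complementary form, Corollary \ref{3.9}) applied to both $\varphi$ and the skew-product flow $\pi$, Lemma \ref{3.10} for the inclusion $A(p)\subset\tilde A(p)$, and the fibring identity $B(\tilde A,\tilde U)=\bigcup_{p\in P}\{p\}\times B(A,U)(p)$. The only difference is that you spell out the fibrewise matching of basins and repellers via the skew-product formula, which the paper cites as a fact without verification.
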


\begin{proof}
From Theorem \ref{3.8}, Lemma \ref{3.10} and the fact that
$B(A,U)=\bigcup \limits_{p\in P}\{p\}\times B(A,U)(p)$, the
required relation   follows.
\end{proof}

%%%%%%%%%%%%%%%%%%%%%%
%%%%%%%%%%%%%%%%%%%%%%
%%%%%%%%%%%%%%%%%%%%%%
\section{Complete Lyapunov functions for nonautonomous dynamical
systems} \label{Lya}

If $X$ is a separable metric space, then we   take
$\{x_j\}_{j=1}^\infty$ as a countable dense subset  of $X$.  For
$p\in P$, we define $x_j(p)=x_j$,
$\varepsilon_j(p,x)=\varepsilon_j$ and $\tau_j(p)=\tau_j$, where
$\{\varepsilon_j\}_{j=1}^\infty\in \mathbb{Q}^+$ and
$\{\tau_j\}_{j=1}^\infty\in \mathbb{Q}^+$. By the construction of
\eqref{3.2}, we   obtain   countable local attractors which we
denote as $\{A_n(p)\}_{n=1}^\infty$.

\begin{lemma}\label{4.3}
  Assume that $(A(p), R(p))$ is a given attractor-repeller pair and $A(p)\in \{A_n(p)\}_{n=1}^\infty$. Then
  there exists a function $l$ for $(A(p),R(p))$ such
  that $l(p,x) $ is continuous with respect to $x\in X$  and   possesses
  the following properties:\\
  (i) $l(p,x)=0$ when $x\in A(p)$, and $l(p,x)=1$ when $x\in
  R(p)$, $p\in P$;\\
  (ii) For $\forall x\in X\backslash (A(p)\bigcup R(p))$ and for $\forall
  t>0$: $1>l(p,x)>l(\theta_tp,\varphi(t,p)x)>0$.
\end{lemma}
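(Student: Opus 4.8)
The goal is to build, for a single attractor-repeller pair $(A(p),R(p))$, a continuous function on $X$ that is $0$ on the attractor, $1$ on the repeller, and strictly decreasing along orbits in between. Let me think about the standard Conley-theoretic construction and how to adapt it to the nonautonomous (pullback) setting.

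In the classical autonomous case, one takes a function measuring "distance to the attractor" $a(x)$ and "distance to the repeller" $r(x)$, and forms something like $l(x) = a(x)/(a(x)+r(x))$. This is $0$ on $A$, $1$ on $R$, and continuous. The key difficulty is making it *strictly decreasing* along orbits, which the raw distance function is not. Conley's trick is to first pass to a supremum (or integral) over the forward/backward orbit to force monotonicity, then normalize.

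So here's my plan. First I would define a raw "attractor-distance" function. Since $A(p)$ is compact, the natural choice is $d_X(x, A(p))$, but this need not decrease along orbits. To force monotonicity I would use a pullback/forward supremum construction. Since we're in the nonautonomous setting with pullback convergence (Lemma 3.1) and the basin $B(A,U)$ is backward invariant while $R$ is forward invariant (Lemma 3.13), I would set something like
$$
a(p,x) = \sup_{t \ge 0} d_X\bigl(\varphi(t,p)x,\, A(\theta_t p)\bigr),
$$
using the fibered distance measured along the forward orbit.

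The point of the supremum is that $a(p,x)$ is automatically non-increasing along orbits: $a(\theta_s p, \varphi(s,p)x) = \sup_{t\ge 0} d_X(\varphi(t+s,p)x, A(\theta_{t+s}p)) \le a(p,x)$ by the cocycle property, since the latter supremum is over a sub-range.

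Let me now think through each piece in order.

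Step 1: Verify $a(p,x)=0$ exactly when $x\in A(p)$. If $x \in A(p)$, then by invariance $\varphi(t,p)x \in A(\theta_t p)$ for all $t\ge0$, so every term vanishes and $a(p,x)=0$. Conversely if $a(p,x)=0$ then in particular $d_X(x,A(p))=0$, so $x\in A(p)$ since $A(p)$ is compact hence closed. Good. I would symmetrically define a repeller function $r(p,x)$ — but here is the first subtlety: the repeller is only forward invariant, not invariant, so I cannot simply mirror the construction along forward orbits. I expect I would instead measure distance to the complement of the basin using the backward-orbit structure, or more simply take $r(p,x) = d_X(x, R(p))$ together with a suppression that forces monotonicity in the opposite direction, or build $r$ from $a$ by exploiting the basin decomposition $X = A \sqcup (B\setminus A) \sqcup R$.

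Step 2: Prove continuity of $a(p,\cdot)$ in $x$. A supremum of continuous functions is only lower semicontinuous in general, so this is a genuine obstacle. I expect the fix is that, because of pullback attraction, the supremum is effectively attained on a bounded time interval on the basin: for $x$ in a neighborhood the orbit enters a fixed neighborhood of $A$ after uniformly bounded time, so the $\sup$ reduces to a sup over a compact time interval, which is continuous. This is exactly where the pre-compactness/pullback-attraction hypotheses are needed, and I expect this to be the main technical point.

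Step 3: Form the normalized function. Once I have a monotone non-increasing $a$ vanishing on $A$ and a monotone non-decreasing $r$ vanishing on $R$, I set
$$
l(p,x) = \frac{a(p,x)}{a(p,x)+r(p,x)}.
$$
On $X\setminus(A(p)\cup R(p))$ both $a$ and $r$ are positive so the denominator never vanishes and $l$ is continuous, $l=0$ on $A$, $l=1$ on $R$. The strict inequality $1 > l(p,x) > l(\theta_t p, \varphi(t,p)x) > 0$ for $x$ strictly between $A$ and $R$ then needs strict monotonicity: I would need $a$ to strictly decrease (or $r$ to strictly increase), which requires promoting the weak monotonicity of the $\sup$-construction to strict monotonicity, typically by replacing the bare supremum with a weighted/integrated version, e.g.
$$
a(p,x) = \int_0^\infty e^{-s}\, g\bigl(\varphi(s,p)x,\, A(\theta_s p)\bigr)\, ds
$$
for a suitable bounded monotone $g$ of the distance (this is cleaner for the continuous-time case; for $\mathbb{T}=\mathbb{Z}$ a weighted sum).

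The hardest step will be Step 2, the continuity of the supremum/integral in $x$, together with upgrading to *strict* monotonicity in Step 3. The continuity hinges on the uniform pullback-absorption near the attractor to truncate the infinite time range; the strictness hinges on the fact that a point outside $A$ genuinely leaves every fixed neighborhood as we run the orbit backward (equivalently, approaches $A$ strictly from outside as we run forward), which is where membership of $A(p)$ in the countable family $\{A_n(p)\}$ and the associated $(\varepsilon,\tau)$-chain structure from Section 3 should be invoked to guarantee no point is "neutrally recurrent" strictly between $A$ and $R$.
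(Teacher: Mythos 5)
Your overall architecture (a distance-based comparison function, a supremum along the forward orbit to force monotonicity, then an exponentially weighted integral to upgrade weak monotonicity to strict) is the same as the paper's, but you perform the supremum and the normalization in the wrong order, and this creates two genuine gaps. First, the repeller-side function $r$ with the opposite (non-decreasing) monotonicity is never constructed: you explicitly hedge between three alternatives (a backward-orbit construction, the raw distance $dist_X(x,R(p))$ plus an unspecified ``suppression'', or building $r$ out of $a$), and none of them works as stated --- the raw distance to $R(p)$ is not monotone along orbits, and there is no backward orbit available because $\varphi$ is only a semi-cocycle defined for $t\ge 0$. Second, on a non-compact state space your $a(p,x)=\sup_{t\ge 0}d_X\bigl(\varphi(t,p)x,A(\theta_tp)\bigr)$ can be $+\infty$, and in any case your continuity argument (truncating the supremum to a compact time interval) only applies to points of the basin, where the orbit converges to the attractor; at points of $R(p)$ no such truncation exists, so the quotient $a/(a+r)$ is out of control precisely where you need $l=1$.

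The paper's fix is to normalize \emph{first}: set $\lambda(p,x)=dist_X(x,A(p))/\bigl(dist_X(x,A(p))+dist_X(x,R(p))\bigr)$, which is continuous, takes values in $[0,1]$, and equals $0$ on $A(p)$ and $1$ on $R(p)$; then the single supremum $g(p,x)=\sup_{t\geq 0}\lambda(\theta_tp,\varphi(t,p)x)$ handles both sides at once. Forward invariance of $A$ and of the repeller (Lemma \ref{3.13}) gives $g=0$ on $A(p)$ and $g=1$ on $R(p)$; monotonicity $g(\theta_tp,\varphi(t,p)x)\leq g(p,x)$ is automatic; continuity at points of $R(p)$ follows from the squeeze $1\geq g(p,x)\geq\lambda(p,x)=1$, while continuity at basin points uses exactly your compact-time truncation via the attraction property \eqref{4.6}. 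Finally $l(p,x)=\int_0^{+\infty}e^{-t}g(\theta_tp,\varphi(t,p)x)\,dt$ is strictly decreasing off $A(p)\cup R(p)$: equality $l(\theta_{t_1}p,\varphi(t_1,p)x)=l(p,x)$ for some $t_1>0$ would force $g$ to be constant along the orbit at times $nt_1$, contradicting \eqref{4.6} since $g(p,x)>0$ for $x\notin A(p)$. Note also that your suggestion to invoke the $(\varepsilon,\tau)$-chain structure for strictness is off target: membership of $A(p)$ in the countable family $\{A_n(p)\}$ is used in the paper only to guarantee that the fibers $U(p)$, $A(p)$, $R(p)$ are constant in $p$ (they are built from $p$-independent data), not in the strict-decrease argument.
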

\begin{proof}
From \eqref{3.2}, we know that $U(\theta_sp)=U(p)$,
$A(\theta_sp)=A(p)$ and $R(\theta_sp)=R(p)$ for $s\in \mathbb{T}$.
Let
  $$
 \lambda(p,x):=\frac {dist_X(x,A(p))}{dist_X(x,A(p))+dist_X(x,R(p))}.  $$
 We set $dist_X(x,R(p))=1$ if $R(p)=\emptyset$, and $dist_X(x,A(p))=1$ if
 $A(p)=\emptyset$.

Then the function $\lambda$ is continuous with respect to $x\in X$
and
  \[ \lambda(p,x)=\begin{cases} 0,&x\in A(p); \\
1, &x\in R(p);\end{cases}
\]
Let $g(p,x)=\sup\limits_{t\geq
0}\lambda(\theta_tp,\varphi(t,p)x)$. In the following we prove
that $g(p,x)$ is continuous with respect to $x\in X$. Since $1\geq
g(p,x)\geq \lambda(p,x)=1$ for $x\in R(p)$, $g(p,x)$ is continuous
at $x\in R(p)$.  For $x\in X\backslash R(p)$,  there exists
$t_0\geq 0$ such that $\varphi(t_0,p,x)\in U(\theta_{t_0}p)$. We
also  have $\varphi(t_0,\theta_{-t-t_0}p,x)\in U(p)$ for $t\in
\mathbb{T}$. So
 \begin{eqnarray}\label{4.6}
\lim \limits_{t\rightarrow \infty}dist_X(\varphi(t,p,x),
A(\theta_tp))\leq \lim \limits_{t\rightarrow
\infty}dist_X(\overline{\varphi(t,\theta_{-t}p)U(\theta_{-t}p)},
A(p)) =0.
 \end{eqnarray}

This implies that
 $g(p,x)=\sup\limits_{t_0 \geq t\geq
0}\lambda(\theta_tp,\varphi(t,p)x)$ for some $t_0>0$. The
continuity of $g(p,x)$ for  fixed $p\in P$ follows from  the
 continuity of $g(p,x)=\sup\limits_{t_0 \geq t\geq
0}\lambda(\theta_tp,\varphi(t,p)x)$ for  fixed $p\in P$.  By
definition we have $g(\theta_tp,\varphi(t,p)x)\leq g(p,x)$.

The function $l$ is defined by $l(p,x)=\int
_0^{+\infty}e^{-t}g(\theta_tp,\varphi(t,p)x)dt$. Since $A(p)$ and
$R(p)$ are forward invariant,    we conclude that  $l(p,x)=0$ for
$x\in A(p)$, and $l(p,x)=1$ for $x\in R(p)$. If $x\in X\backslash
(A(p)\bigcup R(p))$, we define
$l(\theta_{t_1}p,\varphi(t_1,p)x)=l(p,x)$ for some $t_1>0$. Hence
$g(\theta_tp,\varphi(t,p)x)=g(\theta_{t+t_1}p,\varphi(t+t_1,p)x)$
for all $t\geq 0$, from which we   see  that
 $$g(x,p)=g(\theta_{nt_1}p,\varphi(nt_1,p)x)$$
 for all $n\in \mathbb{N}$.
 Let $n \rightarrow \infty$, we  arrive at a contradiction to
 \eqref{4.6}.
 Hence we have
$$l(\theta_tp,\varphi(t,\theta_{-t}p)x)<l(p,x). $$
Obviously, $l(p,x)$ is  continuous with respect to $x\in X$.
\end{proof}

\begin{remark}  In this proof, we have used a similar
construction as in  \cite{Liu3,Pat1,Ras}, which was originated
from \cite{Con}.
\end{remark}

Assume that $l_n(p,x)$ is the Lyapunov function determined by the
attractor-repeller pair $(A_n(p),$ $R_n(p))$. Define
\begin{equation}\label{4.2}
  L(p,x)=\sum\limits_{n=1}^\infty
\frac {2 l_n(p,x)}{3^n}.
\end{equation}
We now show that $L(p,x)$ defined by \eqref{4.2} is a complete
Lyapunov function for $\varphi$.

\begin{theorem} \label{Lyafunc}
  (Complete Lyapunov function for NDS)\\
   The function defined by
  \eqref{4.2} is a complete Lyapunov function for the NDS  $\varphi$.
\end{theorem}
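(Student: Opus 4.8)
The plan is to verify the four defining properties (a)--(d) of Definition \ref{complete} directly for $L$, reading each $l_n$ as a ternary digit of $x$ and organizing everything around the classical Cantor set. First I would record two preliminaries. Since $0\le l_n(p,\cdot)\le 1$ by Lemma \ref{4.3}, the series \eqref{4.2} is dominated by $\sum_n 2\cdot 3^{-n}=1$, so by the Weierstrass $M$-test it converges uniformly in $x$; hence $L(p,\cdot)$ is continuous and takes values in $[0,1]$, matching the required codomain. I would also invoke Corollary \ref{3.9} in the form $CR_\varphi(p)=\bigcap_n[A_n(p)\cup R_n(p)]$ over the countable family $\{A_n(p)\}$ of Section \ref{Lya}, together with the invariance $\varphi(t,p)A_n(p)=A_n(\theta_tp)$ and the forward invariance of $R_n$ from Lemma \ref{3.13}.

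Properties (a) and (b) then follow termwise. For (a), if $x\in CR_\varphi(p)$ then for each $n$ either $x\in A_n(p)$ or $x\in R_n(p)$ (these are disjoint, since $A_n\subset B(A_n,U_n)$ and $R_n$ is its complement), so $l_n(p,x)\in\{0,1\}$; invariance of $A_n$ and forward invariance of $R_n$ carry $\varphi(t,p)x$ into the same set, so each $l_n$ is constant along the orbit and hence so is $L$. For (b), if $x\notin CR_\varphi(p)$ then by Corollary \ref{3.9} there is an index $m$ with $x\notin A_m(p)\cup R_m(p)$, so $l_m$ is \emph{strictly} decreasing along the orbit by Lemma \ref{4.3}(ii), while every $l_n$ is non-increasing along orbits (a consequence of the monotonicity $g_n(\theta_tp,\varphi(t,p)x)\le g_n(p,x)$ built into the construction of $l_n$ in Lemma \ref{4.3}). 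Summing with the positive weights $2\cdot 3^{-n}$ yields $L(\theta_tp,\varphi(t,p)x)<L(p,x)$ for $t>0$, which is stronger than (b).

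For (c) I would exploit that the computation above forces every digit $l_n(p,x)$ into $\{0,1\}$ on $CR_\varphi(p)$, so
\begin{equation*}
L(p,x)=\sum_{n=1}^{\infty}\frac{2\,l_n(p,x)}{3^n}\in C:=\left\{\sum_{n=1}^{\infty}\frac{2a_n}{3^n}: a_n\in\{0,1\}\right\},
\end{equation*}
the classical Cantor ternary set. Thus $L(p,CR_\varphi(p))\subset C$, and since $C$ is nowhere dense in $[0,1]$ every subset of it is nowhere dense. Compactness I would obtain by showing the range is closed: a convergent sequence of values lands in $C$, whose unique $\{0,2\}$-adic expansion forces each digit to stabilize, and a finite-intersection argument on the compact attractors $A_n$ then produces a chain recurrent point realizing the limit.

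Property (d) is where the real work lies. Uniqueness of the $\{0,2\}$-adic expansion of points of $C$ gives, for $x,y\in CR_\varphi(p)$, the equivalence $L(p,x)=L(p,y)\iff l_n(p,x)=l_n(p,y)$ for all $n$, so (d) amounts to: two chain recurrent points lie in the same chain transitive component \emph{iff} no pair $(A_n,R_n)$ of the countable family separates them. The ``same component $\Rightarrow$ not separated'' direction is easy: if $x\in A_n(p)$ while $y\in R_n(p)$, then for $\varepsilon,\tau$ small relative to the absorbing data of $U_n$ no $(\varepsilon,\tau)(p)$-chain can leave the basin $B(A_n,U_n)(p)$, contradicting a chain from $x$ to $y$; hence the digits agree and $L(p,x)=L(p,y)$. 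The converse is the main obstacle, namely \emph{completeness} of the countable family: given $x\not\sim y$ I must produce an index $n$ whose pair separates them. Here I would use separability --- if there is no chain from $x$ to $y$, the endpoint set $U_x$ of \eqref{3.2} is a pre-attractor whose local attractor contains the component of $x$ but excludes $y$, and the data defining $U_x$ can be replaced by a dense base point $x_j$ with rational $\varepsilon_j,\tau_j$ from the enumeration of $\{A_n\}$ without destroying the separation. Ensuring that this rational, countable construction still separates every pair of distinct components is the delicate point of the whole argument; everything else is bookkeeping with the Cantor encoding.
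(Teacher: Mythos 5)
Your proposal follows essentially the same route as the paper's own proof: termwise analysis of the digits $l_n$ using Lemma \ref{4.3} and Corollary \ref{3.9} for properties (a) and (b), the Cantor middle-third encoding for (c), and the basin argument of Lemma \ref{3.4} together with separability for (d). Your treatment of (a) and of the non-increase in (b) is correct and matches the paper; your ``chains cannot leave the basin'' argument for the same-component half of (d) is the same mechanism the paper uses (concatenate chains to get $y(p)\in U_n(p)\subset B(A_n,U_n)(p)$, then apply Lemma \ref{3.4}).

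There are, however, two genuine problems. First, your compactness argument in (c) would fail as stated: you run a finite-intersection argument ``on the compact attractors $A_n$'', but in this non-compact setting the local attractors defined by \eqref{2.2} need not be compact (the paper explicitly remarks they may even be empty), and the repellers $R_n(p)=X-B(A_n,U_n)(p)$ are closed but in general non-compact; so when the stabilized digit string has $a_n=1$ for infinitely many $n$, nothing forces the nested intersection realizing the limit value to be nonempty. (To be fair, the paper itself only asserts that $L(p,CR_\varphi(p))$ lies in the Cantor set, which gives nowhere density but not compactness, so you attempted to fill a gap the paper leaves open --- but the proposed repair does not work.) Second, both your claimed strict decrease in (b) and the different-components half of (d) rest on the completeness of the countable family: that whenever $x\not\sim y$ (or $x\notin CR_\varphi(p)$) some pair $(A_m,R_m)$ built from a dense base point $x_j$ and rational $\varepsilon_j,\tau_j$ actually separates them. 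Corollary \ref{3.9} is an intersection over \emph{all} local attractors, so it does not by itself provide such an index $m$. You candidly flag this as ``the delicate point'' but do not prove it; note that the paper asserts this step without proof as well (``it is clear that\ldots''), so your diagnosis of where the real difficulty lies is accurate, but as written your argument leaves both it and (c) open.
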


\begin{proof}
We show that all conditions in Definition \ref{complete} are
satisfied.

(a) If $x(p)\in CR_\varphi(p)$, then we have
\begin{equation*}
  l_n(p,x(p))=l_n(\theta_tp, \varphi(t,p)x(p)), \quad \forall t\geq
  1,
\end{equation*}
which takes value $0$ or $1$ for each $n\in \mathbb{N}$ by
Corollary \ref{3.9} and Lemma \ref{4.3}.\\
(b)  By Lemma \ref{4.3}, we have
\begin{equation*}
  l_n(\theta_tp,\varphi(t,p)x)<l_n(p,x), \quad \quad \forall
  t>0,  n\in \mathbb{N}
\end{equation*}
for $x\in X\backslash (A_n(p)\bigcup R_n(p))$.  If $x\in A_n(p)\bigcup R_n(p)$, then
$\varphi(t,p)x\in A_n(\theta_tp)\bigcup R_n(\theta_tp)$.
So for $x\in X-CR_\varphi(p)$,
\begin{equation*}
  L(\theta_tp,\varphi(t,p)x)\leq L(p,x), \quad \quad \forall
  t>0,  n\in \mathbb{N}.
\end{equation*}
(c) This is due to the fact that $L(p, CR_\varphi(p))$ is a subset
of the Cantor middle-third
set.\\
(d) If $x(p)\in A_n(p)$, with $x(p)$ and $y(p)$ belonging to the
same chain transitive component, then $y(p)\in U_n(p)\subset
B(A_n,U_n)(p) $ for $p\in P$. Therefore $y(p)\in A_n(p)$ for $p\in
P$ by Lemma \ref{3.4}. Similarly,  if $x(p)\in R_n(p)$ then
$y(p)\in R_n(p)$. It is clear that $L(p,\cdot)$ is constant on
each chain transitive component and $L(p,\cdot)$ takes different
values on different transitive components.

This completes the proof.
\end{proof}

\begin{remark}
  The complete Lyapunov function obtained in Lemma \ref{Lyafunc} is
  weaker than that in the autonomous case.
  It is nonincreasing  along orbits of the skew-product flow
  $\pi(t,(p,x))$.
\end{remark}
\medskip

\textbf{Proof of Theorem \ref{1.2}:}

 Making use of the above
 Theorems \ref{3.8} and \ref{Lyafunc} for chain recurrent set and complete Lyapunov
 function, respectively,
we   obtain the   decomposition in Theorem \ref{1.2}.

%%%%%%%%%%%%%%%%%%%%%%
%%%%%%%%%%%%%%%%%%%%%%
%%%%%%%%%%%%%%%%%%%%%%
\section{Applications} \label{appl}

In this section we consider a few   examples to illustrate the
applications of the  decomposition result in Theorem \ref{1.2}.

\begin{example}
  Consider the differential equation \cite{Klo}
\begin{equation}
    \dot{x}=2tx.
\end{equation}
The solution is $x(t,t_0, x_0)=x_0e^{t^2-t_0^2}$, where $t\geq
t_0$, and the cocycle mapping is
\begin{equation*}
  \varphi(t,t_0, x_0)=x_0e^{(t+t_0)^2-t_0^2},\quad \quad t\geq 0.
\end{equation*}
Here the driving space is $P=\mathbb{R}$ with element $p=t_0$, the
shift map is $\theta_tt_0=t+t_0$ and the state space is
$X=\mathbb{R}$. The pre-attractor is selected as $U=(-1,1)$. The
corresponding local attractor is $A(p)=\{0\}$. From the definition
of $\varphi$ we see that the basin of attraction is
$B(A,U)(p)=\mathbb{R}$. So the chain recurrent set is $\{0\}$.
Moreover, the complete Lyapunov function    is nonincreasing outside
$X-\{0\}$.
\end{example}

We revise the example in \cite{Cra} to fit our purpose here.
\begin{example}
  Consider the base space
      $P=S^1$ and the state space $X=S^1$. Define a shift map
  $\theta_t p=p+t$.
  Introduce a  homeomorphism $\psi(p):S^1\rightarrow S^1$ by
  $\psi(p)x=x+p$. We consider a  NDS defined by
  $$\varphi(t,p)=\psi(\theta_tp)\circ \varphi_0(t)\circ \psi^{-1}(p),$$
  where $\varphi_0$ is the semiflow on $S^1$ determined by the
  equation $$\dot{x}=-\cos x. $$
  Then the NDS $\varphi$ has no nontrivial local attractor.
  Hence by the  decomposition result in Theorem
\ref{1.2}, the chain recurrent
  set is $X$. This means that for any $x(p)\in X$, there exists an
  $(\varepsilon,T)(p)$-chain, beginning and ending at $x(p)$ for $p\in
  P$.
\end{example}

The following Lorenz system under time-dependent forcing was
considered in \cite{Che}.
\begin{example}
Let $\Omega$ be a compact metric space, $\mathbb{R}=(-\infty,
+\infty)$, $(\Omega,\mathbb{R},\sigma)$ be a dynamical system on
$\Omega$, and $H$ be a   Hilbert space. We denote $L(H)$ and $
L^2(H) $ as the spaces of all linear, bilinear, respectively,
endomorphisms on $H$. Let $C(\Omega,H)$ be the space of all
continuous functions $f:\Omega \rightarrow H$, endowed with the
topology of uniform convergence. Consider the non-autonomous
Lorenz system
\begin{equation}\label{4.7}
  u'=A(\omega t)u+B(\omega t)(u,u)+f(\omega t),\quad \omega \in
  \Omega,
\end{equation}
where $\omega t:=\sigma(t,\omega)$, $A\in C(\Omega,L(H))$, $B\in
C(\Omega,H)$,  and $f\in C(\Omega, H)$. Moreover, we assume
\eqref{4.7} satisfies the
following conditions: \\
(\em a) there exists $\alpha>0$ such that
$$Re\langle A(\omega) u,u \rangle \leq -\alpha|u|^2$$
for all $\omega \in \Omega$ and $u\in H$, where $|\cdot|$ is a norm
in $H$, generated by the scalar product $\langle \cdot,\cdot
\rangle$;\\
{(\em b)} $$Re\langle B(\omega)(u,v),w \rangle=-Re \langle
B(\omega)(u,w),v \rangle$$ for every $u,v,w\in H$ and $\omega\in
\Omega$. It can be shown that there exists a global solution
$\varphi(t,x,\omega)$  of \eqref{4.7} on $\mathbb{R}^+$, which
defines a nonautonomous dynamical system $\varphi$.

Recall that the dynamical system $(\Omega,\mathbb{R},\theta)$ is
called asymptotically compact if for any positively invariant
bounded set $A\subset X$, there is a compact $K_A\subset X$ such
that $$\lim \limits_{t\rightarrow
+\infty}dist_{\Omega}(\theta(t,A),K_A)=0.$$ The NDS $\varphi$ is
called asymptotic compact if the associated skew-product flow, on
  $P \times X$,   is asymptotic compact.

If $(\|f\|C_B)/ \alpha^2<1$, where
$C_B:=\sup\{|B(\omega)(u,v)|:\omega\in \Omega, u,v \in H, |u|\leq 1,
|v|\leq 1\}$ and the system \eqref{4.7} is asymptotic compact, then
there exists a map $x:\Omega\rightarrow H$ such that $x(\omega
t)=\varphi(t,x(\omega),\omega)$ for all $\omega\in \Omega$ and $t\in
\mathbb{R}^+$. Here we have $P=\Omega$ and $X=H$.  So the map $x$ is
chain recurrent with respect to $\Omega$. Therefore using the
decomposition result in Theorem \ref{1.2}, we can decompose the
space $H$ into two nontrivial parts. One is the chain recurrent part
and the other is the gradient-like part.

\end{example}

\begin{remark}
   Note that a stationary orbit (or stationary solution) is chain recurrent.
   If we know that there exists a  stationary solution
   in a nonautonomous differential
   equation, we can   conclude that   the
  chain recurrent set   is not empty.  This is one way to
  demonstrate  that the chain recurrent set is not empty.
\end{remark}
The following example is the nonautonomous Navier-Stokes equation,
which was also considered  in \cite{Che1, Pea}.

\begin{example} Consider the 2-dimensional Navier-Stokes equation
\begin{eqnarray}
 &&u_t+\sum\limits_{i=1}^2u_i\partial_iu=\nu\triangle u+\nabla p+F(t) \label{4.8}\\
  &&div u=0, \quad  u|_{\partial D}=0, \nonumber
\end{eqnarray}
where $u=(u_1,u_2)$ is the velocity field, $F(x,y,t)=(F_1,F_2)$ is
the external forcing, and $D$ is an open bounded fluid domain with
smooth boundary $\partial D\in C^2$. We denote by $H$ and $V$ the
closures of the linear space $\{u\mid u\in C_0^\infty (D)^2\}$ in
$L^2(D)^2$ and $H_0^1(D)^2$, respectively. We also denote by $Pr$
the corresponding orthogonal projection $Pr:L^2(D)^2\rightarrow H$.
We further set
\begin{equation*}
A:=-\nu Pr\triangle, \quad B(u,u):=Pr(\sum
\limits_{i=1}^2u_i\partial v)
\end{equation*}
  Applying the orthogonal projection $Pr$, we rewrite the Navier Stokes equation \eqref{4.8} in the operator
  form
  \begin{equation}\label{4.1}
    \frac{du}{dt}+Au+B(u,u)=f(t),\quad  u(0)=u_0\in H,
  \end{equation}
  Here $X=H$, which is a Hilbert space.
 Now suppose  $f$ is a periodic function in $C(\mathbb{R},H)$ and define
  $\theta_tf(\cdot):=f(\cdot +t)$. Then $P=\bigcup  \limits_{t\in
  \mathbb{R}}\theta_tf$ is a compact subset of $C(\mathbb{R},H)$.  Then
  $\varphi(t,u,p):=u(t,u,p)$ is continuous from $\mathbb{R}^+\times H \times C(\mathbb{R},H)\rightarrow
  H$.
  Then the nonautonomous dynamical system $(H, \varphi, (P,\mathbb{R},
  \theta)) $ generated by the Navier-Stokes equation \eqref{4.1} with
  periodic forcing term in $C(\mathbb{R},H)$ has a pullback
  attractor. It is the nontrivial local attractor. By Theorem \ref{3.8}, we conclude that the chain
  recurrent set is not empty. We  can also decomposition the space
  $H$ into two parts: the chain recurrent part $CR_\varphi(p)$ and
  gradient-like part $H-CR_\varphi(p)$.
\end{example}

%%%%%%%%%%%%%%%%%%%%%%%%%%%%%%%%%%%%%%%%%%%%%%%%%%%%%%%%%
\vspace{1cm}
 {\bf Acknowledgements.} We thank Martin Rasmussen for helpful comments.
 This research was partly
supported by the NSF grant 0620539, the Cheung Kong Scholars
Program and the K. C. Wong Education Foundation.

\end{document}